\def\cl@chapter{}
\journalname{Mathematical Programming}
\newtheorem{theorem}{Theorem}[section]
\newtheorem{lemma}[theorem]{Lemma}
\newtheorem{corollary}[theorem]{Corollary}
\newtheorem{proposition}[theorem]{Proposition}
\theoremstyle{definition}
\newtheorem{definition}[theorem]{Definition}
\theoremstyle{remark}
\Crefname{lemma}{Lemma}{Lemmas}
\crefname{proposition}{Proposition}{Propositions}
\crefname{definition}{Definition}{Definitions}
\Crefname{proposition}{Proposition}{Propositions}
\Crefname{corollary}{Corollary}{Corollaries}
\newcommand{\NN}{\ensuremath{\mathds N}}
\newcommand{\RR}{\ensuremath{\mathds R}}
\def\re{\mathds R}
\def\na{\mathds N}
\def\DD{\mathbf D}
\def\KK{\mathbf K}
\def\vz{\mathbf z}
\def\lV{\left\lVert }
\def\rV{\right\rVert }
\def\Eset{\mathcal{E}}
\def\Sset{\mathcal{S}}
\DeclareMathOperator{\Id}{Id}
\DeclareMathOperator{\aff}{aff}
\DeclareMathOperator{\dist}{dist}
\DeclareMathOperator{\dom}{dom}
\DeclareMathOperator{\inte}{int}
\DeclareMathOperator{\Fix}{Fix}
\DeclareMathOperator{\diag}{diag}
\newcommand{\SOC}{\mathcal{C}}
\newcommand{\norm}[1]{\left\lVert#1\right\rVert}
\newcommand{\scal}[2]{\left\langle{#1},{#2}  \right\rangle}
\DeclareMathOperator{\circum}{circ}
\newcommand{\MAP}{\ensuremath{\mathrm{MAP}}}
\newcommand{\CRM}{\ensuremath{\mathrm{CRM}}}
\newcommand{\SPM}{\ensuremath{\mathrm{SPM}}}
\newcommand{\PCRM}{\ensuremath{\mathrm{pCRM}}}
\newcommand{\Cent}{\ensuremath{\mathrm{C}}}
\newcommand{\cCRM}{\ensuremath{\mathrm{cCRM}}}
\newcommand{\pCRM}{\ensuremath{\mathrm{pCRM}}}
\newcommand{\pCRMOp}{\ensuremath{\mathscr{C}}}
\newcommand{\CRMprod}{\ensuremath{\mathrm{CRMprod}}}
\newlist{lista}{enumerate}{1}
\setlist[lista]{label=\alph*., nosep,leftmargin=*,align=right}
\newlist{listi}{enumerate}{1}
\setlist[listi]{label={\upshape(\roman*\upshape)},leftmargin=*,align=right, widest=iii,nosep, format=\bf}
\newcommand*\linenomathpatchAMS[1]{%
  \expandafter\pretocmd\csname #1\endcsname {\linenomathAMS}{}{}%
  \expandafter\pretocmd\csname #1*\endcsname{\linenomathAMS}{}{}%
  \expandafter\apptocmd\csname end#1\endcsname {\endlinenomath}{}{}%
  \expandafter\apptocmd\csname end#1*\endcsname{\endlinenomath}{}{}%
}
  \let\linenomathAMS\linenomathWithnumbers
  \patchcmd\linenomathAMS{\advance\postdisplaypenalty\linenopenalty}{}{}{}
  \let\linenomathAMS\linenomathNonumbers
\begin{document}

\title{On the centralization of the circumcentered-reflection method~\thanks{\textbf{RB} was partially supported by \emph{Conselho Nacional de Desenvolvimento Cient\'ifico e Tecnol\'ogico} (CNPq), Grants 304392/2018-9 and 429915/2018-7 and \emph{Fundação de Amparo à Pesquisa do Estado do Rio de Janeiro} (FAPERJ), Grant  E-26/201.345/2021;  \textbf{YBC} was partially supported by the \emph{National Science Foundation} (NSF), Grant DMS-2307328, and by an internal grant from NIU. \textbf{LRS} was partially supported by CNPq,  Grant 113190/2022-0.}}

\author{Roger Behling  \and Yunier Bello-Cruz  \and 
Alfredo N. Iusem\and Luiz-Rafael Santos 
}

\institute{
  Roger Behling   \Letter  \at School of Applied Mathematics, Funda\c{c}\~ao Get\'ulio Vargas \\ 
Rio de Janeiro, RJ -- 22250-900, Brazil. \email{rogerbehling@gmail.com}
    \and 
Yunier Bello-Cruz \at Department of Mathematical Sciences, Northern Illinois University. \\ 
 DeKalb, IL -- 60115-2828, USA. \email{yunierbello@niu.edu}
\and 
Alfredo N. Iusem
 \at School of Applied Mathematics, Funda\c{c}\~ao Get\'ulio Vargas \\ 
Rio de Janeiro, RJ -- 22250-900, Brazil. \email{iusp@impa.br} 
 \and
 Luiz-Rafael Santos \at Department of Mathematics, Federal University of Santa Catarina. \\ 
Blumenau, SC -- 89065-300, Brazil. \email{l.r.santos@ufsc.br}
}
\date{}

\titlerunning{On the centralization of the circumcentered-reflection method}
\authorrunning{Behling, Bello-Cruz, Iusem and Santos}
\maketitle
\vspace*{-10mm}

\begin{abstract}
This paper is devoted to deriving the first circumcenter iteration scheme that does not employ a product space reformulation for finding a point in the intersection of two closed convex sets. We introduce a so-called centralized version of the circumcentered-reflection method (CRM). Developed with the aim of accelerating classical projection algorithms, CRM is successful for tracking a common point of a finite number of affine sets. In the case of general convex sets, CRM was shown to possibly diverge if Pierra’s product space reformulation is not used. In this work, we prove that there exists an easily reachable region consisting of what we refer to as centralized points, where pure circumcenter steps possess properties yielding convergence. The resulting algorithm is called centralized CRM (cCRM). In addition to having global convergence, cCRM converges linearly under an error bound condition, and superlinearly if the two target sets are so that their intersection have nonempty interior and their boundaries are locally differentiable manifolds. We also run numerical experiments with successful results. 


\keywords{Convex feasibility problem \and 
Circumcentered-reflection method \and Projection methods.}
\subclass{49M27 \and 65K05 \and 65B99 \and 90C25}

\end{abstract}

\section{Introduction}\label{s0}

In this work we introduce a new tool for solving the following convex feasibility problem (CFP):
\[\label{eq:CFP}
  \text{ find } z \in X\cap Y,  
\]
where $X,Y\subset \re^n$ are two given closed convex sets with nonempty intersection.

The circumcentered-reflection method (CRM) was presented in 2018 in \cite{Behling:2018} 
as an acceleration technique for classical projection methods. Since then, a quite robust literature related to CRM has been developed~\cite{Araujo:2022,  Arefidamghani:2023, Bauschke:2018, Bauschke:2020, Bauschke:2021, Bauschke:2021b, Bauschke:2021d, Behling:2018a, Behling:2020, Behling:2021b, Behling:2023, Dizon:2022, Dizon:2022a, Lindstrom:2021, Lindstrom:2022,  Ouyang:2018, Ouyang:2021a, Ouyang:2023, Ouyang:2022a, Ouyang:2022b}. 
 If one is at a point $z^{k}\in \re^{n}$, the original CRM  for problem \cref{eq:CFP} moves to the iterate 
\[\label{eq:CRMiterate}
  z^{k+1}_{\CRM}=T_{\CRM}(z^k)\coloneqq \circum(z^{k},R_{X}(z^{k}),R_{Y}R_{X}(z^{k})), 
  \] 
  where $R_X,R_Y:\re^n\to\re^n$ are the orthogonal reflectors through $X,Y$, defined as $R_{X}\coloneqq 2P_X - \Id,R_{Y}\coloneqq 2P_{Y} - \Id$, and $P_{X},P_{Y}:\re^n\to\re^n$ are the orthogonal projections onto $X,Y$, respectively. The Euclidean circumcenter $\circum(z,v,w)$ is the point equidistant to the vertices $z,v,w\in \re^n$ lying on the affine subspace determined by the correspondent triangle  (see \cite[Eq. (2)]{Behling:2018a}). Formally, we have the following definition.
  \begin{definition}[Circumcenter]\label[definition]{def:circum}
  Let $z,v,w\in\re^n$ be given. The \emph{circumcenter} $\circum(z,v,w)\in\re^n$ is a point satisfying
  \begin{enumerate}[(i), format=\bf,leftmargin=*,align=right, widest=ii]
    \item   $\norm{\circum(z,v,w) - x}=\norm{\circum(z,v,w) - y}=\norm{\circum(z,v,w) - z}$ and,
    \item  $\circum(z,v,w)\in \aff\{z,v,w\}\coloneqq \{u\in\re^n \mid u=z+\alpha (v-z)+\beta (w-z),\;\alpha,\beta\in\re\}$.
  \end{enumerate}
  \end{definition}
  
  The point $\circum(z,v,w)$ is well and uniquely defined if the cardinality of the set $\{z,v,w\}$ is one or two. In 
  the case in which the three points are all distinct,  $\circum(z,v,w)$ is well and uniquely defined  only if $x$, $y$ and $z$ 
  are not collinear \cite{Bauschke:2018}.   
  Iteration \cref{eq:CRMiterate}  is well-defined and leads to convergence when $X$ and $Y$ are affine \cite{Behling:2018a}. This is also the case for multi-set affine intersection~\cite{Behling:2020,Behling:2021b} or when the reflectors are substituted by isometries~\cite{Bauschke:2020}. 

  CRM   first aimed to speed up the Douglas-Rachford method (DRM)~\cite{Douglas:1956,Lindstrom:2021,Bauschke:2014b} (also known as the averaged alternating reflections' method). 
Later, in~\cite{Behling:2020}, CRM was connected to the famous method of alternating projections (MAP)~\cite{Bauschke:1993}  whose iteration employs a composition of projections as follows
\[\label{eq:MAP}
z^{k+1}_{\MAP} = T_{\MAP}(z^k) \coloneqq P_{Y}P_{X}(z^{k}).
\]

In \cite{Behling:2018}, Behling, Bello-Cruz and Santos pointed out that iteration \cref{eq:CRMiterate} could fail to
be well-defined or to approach the target set. Later on, Arag{\'o}n~Artacho, Campoy and Tam~\cite[Figure 10]{AragonArtacho:2020} chose an initial point in this very example for which the correspondent CRM sequence actually diverges. Fortunately, 
this was overcome in \cite{Behling:2021b} by considering Pierra's product space reformulation~\cite{Pierra:1984}. 
Pierra stated that problem \cref{eq:CFP} is univocally related to the problem of finding a common point to the diagonal 
subspace $\DD\coloneqq \{(z,z) \mid z \in \re^{n}\}$ and the Cartesian convex set $\KK=X\times Y$. 
In \cite{Behling:2021b} it was shown that a sequence of circumcenters, with initial point in $\DD$ and that iterates as 
 \[\label{eq:CRM-Pierraiterate}
  \vz^{k+1}_{\CRMprod}\coloneqq \circum(\vz^{k},R_{\KK}(\vz^{k}),R_{\DD}R_{\KK}(\vz^{k})),
  \]
converges to a point $\vz^{*} = (z^{*},z^{*})$, where $z^{*} \in X\cap Y$, that is, $z^{*}$ is a solution of problem \cref{eq:CFP}. 
Such a result is indeed derived in \cite{Behling:2021b} for {the case of} a finite number of convex sets.

In this work, we prove that a CRM step based on parallel reflections leads to convergence as long as the iterates stay in an appropriate region. 
We will easily reach this region and get a very fast CRM projection-type method for solving problem \cref{eq:CFP}. 
In our study, the following parallel CRM (pCRM) iteration will be considered
\[\label{eq:pCRM-iteration}
z^{k+1}_{\pCRM}=\pCRMOp(z^k) \coloneqq \circum(z^{k},R_{X}(z^{k}),R_{Y}(z^{k})).\]
We show that this iteration provides adequate steps for solving problem \cref{eq:CFP} 
if the angle between the vectors $R_{X}(z^{k}) - z^{k}$ and $R_{Y}(z^{k}) - z^{k}$ is obtuse (or right). 
When $z^{k}$ satisfies this property, we say that it is \emph{centralized}. Achieving such property is possible, for instance, by taking an appropriate projection procedure. Roughly speaking, we show {that} a MAP step taken from any given $z\in \re^{n}$ 
helps to provide a point with the desired feature. In fact,  
\[\label{eq:CentralizationStep}
  z_{\Cent}\coloneqq \tfrac{1}{2} \left( z_{\MAP} + P_{X}(z_{\MAP}) \right),
\]
is a centralized point, that is, it satisfies 
$\scal{R_{X}( z_{\Cent}) -  z_{\Cent}}{R_{Y}( z_{\Cent}) -  z_{\Cent}}\leq 0$, 
where $\scal{\cdot}{\cdot}$ stands for the Euclidean inner product and $z_{\MAP}\coloneqq P_YP_X(z)$.

A pCRM step \cref{eq:pCRM-iteration} represents an acceleration of the Simultaneous Projections Method (SPM), also called Cimmino's method  \cite{Cimmino:1938}, given by
\[\label{eq:Cimmino}
z^{k+1}_{\SPM} = T_{\SPM}(z^k)\coloneqq \tfrac{1}{2} \left( P_{X}(z^{k}) + P_{Y}(z^{k}) \right),
\]
known to converge to a point in $X\cap Y$ whenever 
$X\cap Y\ne\emptyset$. We mention, parenthetically, that this method is devised for several convex sets with different weights in the average of the projections; iteration \cref{eq:Cimmino} corresponds to the case of two sets with equal weights.

  We note that our centralization procedure \cref{eq:CentralizationStep} comes from the composition of the simultaneous projection operator $T_{\SPM}$ and the alternating projection operator $T_{\MAP}$. Indeed, for any $z\in \re^n$, 
  \begin{align}z_\Cent&=\tfrac{1}{2} \left(z_{\MAP} +  P_{X}(z_{\MAP})\right)\\ &= \tfrac{1}{2} \left(P_{Y}P_{X}(z) +  P_{X}(P_{Y}P_{X}(z))\right)\\&= \tfrac{1}{2} \left(P_{Y}(P_{Y}P_{X}(z)) +  P_{X}(P_{Y}P_{X}(z))\right)\\&=T_{\SPM}(T_{\MAP}(z)).\end{align} 

That said, we can now formulate the centralized circumcentered-reflection method (\cCRM). It iterates by composing MAP, SPM and pCRM given in \cref{eq:pCRM-iteration}, that is, 
  \[\label{eq:cCRM-iteration}
z^{k+1}_{\cCRM}=\pCRMOp(T_{\SPM}(T_{\MAP}(z^{k}))).\]

The goal of our paper is to study cCRM. We will prove  in \Cref{thm:pCRM-centralized} that, for any starting point $z^0\in \re^n$, the sequence generated by iteration \cref{eq:cCRM-iteration} converges to a point in $X\cap Y$. If an error bound condition holds for problem \cref{eq:CFP}, we show in \Cref{t1} that cCRM converges linearly, and we derive an upper bound for its asymptotic error constant. Finally,  \Cref{theorem:superlinearConv} states that, in the case where $X\cap Y$ has nonempty interior and the boundaries of $X$ and $Y$ are locally differentiable manifolds, cCRM actually converges superlinearly.   

\begin{figure*}[!ht]
  \centering 
    \begin{subfigure}{0.48\textwidth}
      \includegraphics[width = \textwidth]{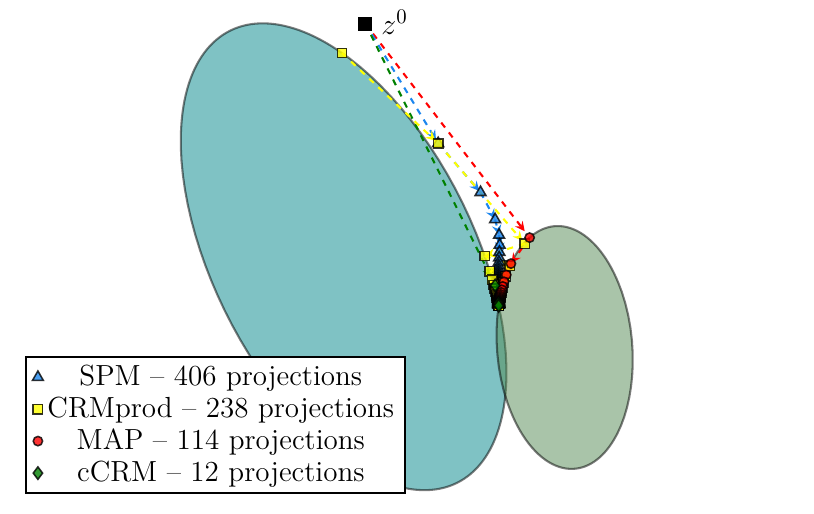}
      \caption{Nonempty interior of intersection.}   \label{fig:TwoEllipsoids-nonemptyinterior}
    \end{subfigure} 
    \begin{subfigure}{0.48\textwidth}
      \includegraphics[width = \textwidth]{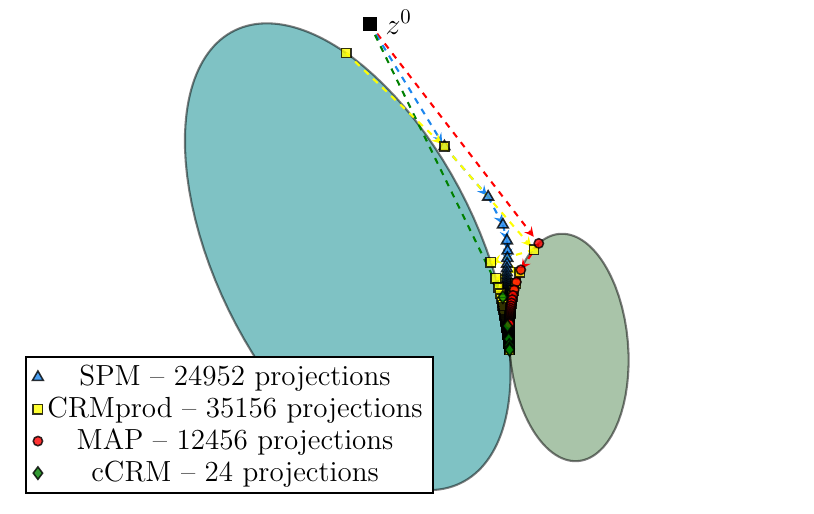}
      \caption{Empty interior intersection.} \label{fig:TwoEllipsoids-emptyinterior}
    \end{subfigure}
    \caption{SPM, CRMprod, MAP, and cCRM paths of iterates on two ellipsoids intersection.}
    \label{fig:TwoEllipsoids}
  \end{figure*}

We present in \Cref{fig:TwoEllipsoids} two instances which illustrate this better performance of cCRM when compared with SPM, CRMprod, and MAP. Note that  SPM, CRMprod, and MAP need to compute two projections (one for each set $X$ and $Y$) at each iterate, while cCRM needs four projections. Thus, we list in the pictures the number of projections that each method takes to achieve convergence, and display correspondent paths towards a solution. We can envision in this example the superiority of cCRM, even when the intersection between the target sets has empty interior; see \Cref{fig:TwoEllipsoids-emptyinterior}. In this numerical example, convergence is understood to occur when the distance of the iterate to the intersection of $X$ and $Y$ is proportional to a tolerance of order $\varepsilon \coloneqq \num{e-4}$. We note that in  \Cref {fig:TwoEllipsoids-nonemptyinterior} the boundaries of the sets $X$ and $Y$  are locally differentiable manifolds and the interior of their intersection is nonempty; that said, the picture displays the superlinear convergence of cCRM, proved later.

The paper is organized as follows. In \cref{s2}, we prove that cCRM 
converges globally to a solution of problem \cref{eq:CFP}. We begin \Cref{s4} with a discussion on error bound conditions;  then, assuming that \cref{eq:CFP} satisfies an error bound condition, we proceed deriving linear convergence of cCRM and an upper bound for the linear rate; finally, under additional hypotheses, we prove that cCRM converges superlinearly.  \Cref{s6} exhibits numerical experiments showing 
cCRM outperforming CRMprod and MAP.  \Cref{s7} presents concluding remarks. 


\section{Convergence of cCRM}\label{s2}

We start this section with the definition of a centralized point.

\begin{definition}[Centralized point]\label[definition]{d1}
Given two closed and convex sets $X,Y\subset\re^n$ a point $z\in\re^n$ is said to be \emph{ centralized with respect to $X,Y$} if 
\[\label{Eq:reflections-central-ineq}
\langle R_X(z)-z,R_Y(z) -z\rangle\le 0.
\]
\end{definition}

Note that we can get an equivalent definition to the one above if we replace the reflections by projections in \cref{Eq:reflections-central-ineq}. Indeed, this is true because
\begin{align}\scal{R_{X}( z_{\Cent}) -  z_{\Cent}}{R_{Y}( z_{\Cent}) -  z_{\Cent}}& =\scal{2P_{X}( z_{\Cent}) - 2z_{\Cent}}{2P_{Y}( z_{\Cent})- 2 z_{\Cent} }\\& = 4 \scal{P_{X}( z_{\Cent}) - z_{\Cent}}{P_{Y}( z_{\Cent})-  z_{\Cent} }. \end{align}
More than that, if $z$ is in $X$ or $Y$, then it is centralized. However, points in $X\cup Y$ are not the most suitable for our algorithmic framework. Therefore, our main interest will be on centralized points that are neither in $X$ nor in $Y$. Those points will be referred to as \emph{strictly centralized}. Note that the definition of strictly centralized point does not necessarily imply that inequality \cref{Eq:reflections-central-ineq} holds strictly.

We will present a series of lemmas  aiming to guarantee that the pCRM iteration  \cref{eq:pCRM-iteration} computed from a strictly centralized point moves towards the solution set of problem \cref{eq:CFP}. However, a pCRM step taken from a non-centralized point may push the next iterate away from the solution set. This behavior is depicted in \Cref{fig:zC_pCRM}. Note in \Cref{fig:zC_pCRMa} that going from $z$ to $z_{\Cent}$ is already better than moving from $z$ to $z_{\pCRM}$. Furthermore, we get even closer to the solution set with $z_{\cCRM}$.
To make this visible, we present part of  \Cref{fig:zC_pCRMa} zoomed in \Cref{fig:zC_pCRMb}. In this way, we illustrate the benefit of the combination of centralization and computation of parallel circumcenter.

\begin{figure*}[!ht]
  \centering 
    \begin{subfigure}{0.45\textwidth}
      \includegraphics[width = \textwidth]{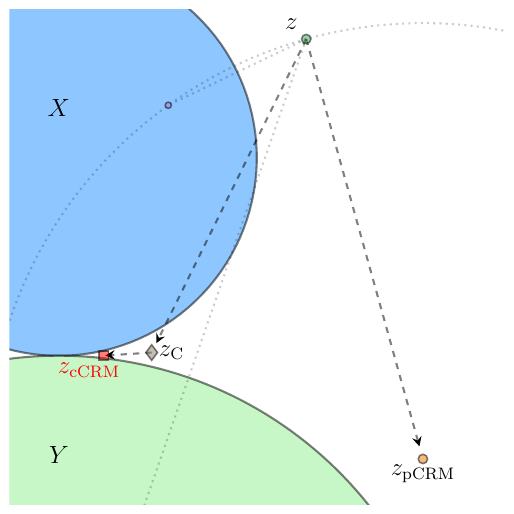}
        \caption{The role of the centralization procedure}   \label{fig:zC_pCRMa}
      \end{subfigure} \qquad 
      \begin{subfigure}{0.45\textwidth}
        \includegraphics[width = \textwidth]{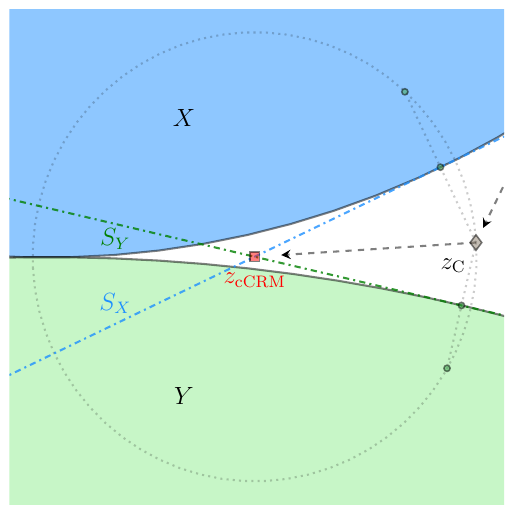}
        \caption{Zoom of the computation $z_{\cCRM}$ from $z_{\Cent}$} \label{fig:zC_pCRMb}
      \end{subfigure}
      \caption{Characterization of centralized circumcenters.}   \label{fig:zC_pCRM}
    \end{figure*}

The next lemma shows that the composition of SPM and MAP, mentioned in the introduction, provides either a strictly centralized point or a solution of problem \cref{eq:CFP}.

\begin{lemma}[Centralization procedure]\label[lemma]{Lemma:CentralProcedure}
Let $X,Y\subset \re^n$ be two closed convex sets with nonempty intersection. For any $z\in \re^n$ set $z_{\MAP}= P_YP_X(z)$. Then, $z_{\Cent}$ from \cref{eq:CentralizationStep}, \emph{i.e.}, $z_{\Cent}\coloneqq \tfrac{1}{2} \left( z_{\MAP} + P_{X}(z_{\MAP}) \right)$,  is centralized. Moreover, $z_{\Cent}$ is either strictly centralized or it belongs to $X\cap Y$. 

\end{lemma}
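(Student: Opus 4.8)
The plan is to unwind the two projections hidden in the definition of $z_{\Cent}$ and to exploit that $z_{\Cent}$ is the midpoint of a point of $Y$ and its projection onto $X$. Write $a \coloneqq P_X(z) \in X$, $b \coloneqq z_{\MAP} = P_Y(a) \in Y$ and $c \coloneqq P_X(z_{\MAP}) = P_X(b) \in X$, so that $z_{\Cent} = \tfrac{1}{2}(b+c)$ and hence $c - z_{\Cent} = \tfrac{1}{2}(c-b) = -(b - z_{\Cent})$. By the reformulation of centrality in terms of projections noted right after \cref{d1}, it is enough to prove $\scal{P_X(z_{\Cent}) - z_{\Cent}}{P_Y(z_{\Cent}) - z_{\Cent}} \le 0$.

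The first step is to identify $P_X(z_{\Cent}) = c$. This is the standard fact that the metric projection onto a convex set is constant along the segment from a point to its projection: since $c = P_X(b)$ one has $\scal{x - c}{z_{\Cent} - c} = \tfrac{1}{2}\scal{x - c}{b - c} \le 0$ for every $x \in X$, and $c \in X$, so $c$ satisfies the variational characterization of $P_X(z_{\Cent})$. Consequently $P_X(z_{\Cent}) - z_{\Cent} = c - z_{\Cent} = -(b - z_{\Cent})$, and the claim reduces to $\scal{b - z_{\Cent}}{P_Y(z_{\Cent}) - z_{\Cent}} \ge 0$. Putting $q \coloneqq P_Y(z_{\Cent})$, decomposing $b - z_{\Cent} = (b - q) + (q - z_{\Cent})$, and invoking the variational inequality $\scal{b - q}{z_{\Cent} - q} \le 0$ (legitimate because $b \in Y$), I obtain $\scal{b - z_{\Cent}}{q - z_{\Cent}} = \scal{b - q}{q - z_{\Cent}} + \norm{q - z_{\Cent}}^2 \ge \norm{q - z_{\Cent}}^2 \ge 0$; in fact this even yields the sharper bound $\scal{P_X(z_{\Cent}) - z_{\Cent}}{P_Y(z_{\Cent}) - z_{\Cent}} \le -\norm{z_{\Cent} - P_Y(z_{\Cent})}^2 \le 0$, so $z_{\Cent}$ is centralized.

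For the ``moreover'' part I would prove the contrapositive: if $z_{\Cent} \in X$ or $z_{\Cent} \in Y$, then $b = c$, whence $z_{\Cent} = b = c \in X \cap Y$; equivalently, if $z_{\Cent} \notin X \cap Y$ then $z_{\Cent}$ belongs to neither $X$ nor $Y$, i.e.\ it is strictly centralized. The case $z_{\Cent} \in X$ is immediate: then $z_{\Cent} = P_X(z_{\Cent}) = c$, so $\tfrac{1}{2}(b+c) = c$ forces $b = c$. The case $z_{\Cent} \in Y$ requires combining the optimality conditions of the two projections: testing $b = P_Y(a)$ against the admissible point $z_{\Cent} \in Y$ yields $\scal{c - b}{a - b} \le 0$, while testing $c = P_X(b)$ against the admissible point $a \in X$ yields $\scal{a - c}{b - c} \le 0$, i.e.\ $\scal{c - b}{a - c} \ge 0$; expanding $a - b = (a - c) + (c - b)$ in the first inequality and substituting the second collapses it to $\norm{c - b}^2 \le 0$, so again $b = c$.

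I expect this last sub-case ($z_{\Cent} \in Y$) to be the main obstacle: unlike $z_{\Cent} \in X$ it is not settled by a single projection identity, and it hinges on using both the variational characterization of $P_Y$ at $a$ and of $P_X$ at $b$, crucially with $a = P_X(z) \in X$ serving as a feasible test point for the second. Everything else — pinning down $P_X(z_{\Cent}) = c$ and the nonnegativity estimate for the inner product — is routine once the midpoint decomposition $z_{\Cent} = \tfrac{1}{2}(b+c)$ with $b \in Y$ and $c = P_X(b)$ is in hand.
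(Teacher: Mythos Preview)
Your proposal is correct and follows essentially the same route as the paper's proof: both identify $P_X(z_{\Cent})=P_X(z_{\MAP})$, reduce centrality to the projection inequality for $P_Y$ tested at $z_{\MAP}\in Y$, dispose of the case $z_{\Cent}\in X$ by the midpoint identity, and handle $z_{\Cent}\in Y$ by summing the variational inequalities for $P_Y$ at $P_X(z)$ (tested against $z_{\Cent}$) and for $P_X$ at $z_{\MAP}$ (tested against $P_X(z)$) to force $\|P_X(z_{\MAP})-z_{\MAP}\|^2\le 0$. Your $a,b,c$ notation is cleaner, but the argument is step-for-step the same.
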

\begin{proof}
Observe that $P_X(z_{\Cent})=P_X(z_{\MAP})$ and so $z_{\Cent} = \tfrac{1}{2} \left( z_{\MAP} + P_X(z_{\Cent}) \right)$. Hence,
\begin{align}\scal{P_{X}( z_{\Cent}) - z_{\Cent}}{P_{Y}( z_{\Cent})-  z_{\Cent} }&=\scal{2 z_{\Cent} - z_{\MAP} - z_{\Cent}}{P_{Y}( z_{\Cent})-  z_{\Cent} }\\
  &=\scal{z_{\Cent} -  z_{\MAP} }{P_{Y}( z_{\Cent})-  z_{\Cent} }\\
  &= \scal{P_{Y}( z_{\Cent}) -  z_{\MAP} }{P_{Y}( z_{\Cent})-  z_{\Cent} } + \scal{z_{\Cent} -  P_{Y}( z_{\Cent})}{P_{Y}( z_{\Cent})-  z_{\Cent} }\\&= \scal{z_{\Cent} -P_{Y}( z_{\Cent})}{  z_{\MAP} - P_{Y}( z_{\Cent})  } - \|z_{\Cent} -  P_{Y}( z_{\Cent})\|^2\\&\leq \scal{z_{\Cent} -P_{Y}( z_{\Cent})}{  z_{\MAP} - P_{Y}( z_{\Cent})  }\leq 0, \end{align}
where the last inequality follows from the characterization of projections, since $z_{\MAP}\in Y$. This proves that $z_{\Cent}$ is centralized. 

Assume that $z_{\Cent}\in X$. Since \[z_{\Cent}=\tfrac{1}{2} \left( z_{\MAP} + P_{X}(z_{\MAP}) \right)=\tfrac{1}{2} \left( z_{\MAP} + P_{X}(z_{\Cent}) \right)=\tfrac{1}{2} \left( z_{\MAP} + z_{\Cent} \right),\] we have $z_{\Cent}=z_{\MAP}$. Then, $z_{\Cent}\in Y$ because $z_{\MAP}=P_{Y}P_{X}(z)$. Thus,  $z_{\Cent}\in X\cap Y$.

 Now, if $z_{\Cent}\in Y$, note that
\begin{align}\scal{P_X(z)-z_{\MAP}}{P_X(z_{\MAP})-z_{\MAP}} & = 2 \scal{P_X(z)-z_{\MAP}}{z_{\Cent}-z_{\MAP}}\\&=2 \scal{P_X(z)-P_Y(P_X(z))}{z_{\Cent}-P_Y(P_X(z))}\\&\leq 0\label{Eq:projection-ineq1},
\end{align}
where the first and the second equalities follow from the definitions of $z_{\Cent}$ and $z_{\MAP}$, respectively.  The inequality is due to the characterization of the projection of $P_X(z)$ onto $Y$. On the other hand, the characterization of the projection of $z_{\MAP}$ onto $X$ gives us
\begin{align}\scal{z_{\MAP}-P_X(z_{\MAP})}{P_X(z)-P_X(z_{\MAP})} & \leq 0,
\end{align} or equivalently, 
\begin{align}\scal{P_X(z_{\MAP})-P_X(z)}{P_X(z_{\MAP})-z_{\MAP}} & \leq 0\label{Eq:projection-ineq2}.
\end{align}
Summing up \cref{Eq:projection-ineq1,Eq:projection-ineq2}, we get $\|P_X(z_{\MAP})-z_{\MAP}\|^2\le 0$, where $\norm{\cdot}$ stands for the norm induced by the Euclidean inner product. So, $z_{\MAP}=P_X(z_{\MAP})$ and hence, $z_{\MAP}\in X$  and $z_{\Cent}=z_{\MAP}$. Therefore,  $z_{\Cent}\in X\cap Y$.
\end{proof}

Next, we are going to prove that the parallel circumcenter at a centralized point  $z\in \re^n$ is actually the projection of $z$ onto the intersection of two suitable halfspaces defined by the supporting hyperplanes to $X$ and $Y$ passing through $P_{X}(z)$ and $P_{Y}(z)$, respectively; see \Cref{fig:zC_pCRMb}.

\begin{lemma}[Characterization of centralized circumcenters]\label[lemma]{lemma:ZcCharacterization}
  Let $X,Y\subset \re^n$ be two given closed convex sets with nonempty intersection. Assume that  $z\in \re^n$ is a centralized point with respect to $X,Y$. Then, the parallel circumcenter at $z$, 
  \[
    \pCRMOp(z)\coloneqq\circum\{z, R_{X}(z), R_{Y}(z)\},
  \]
coincides with \(P_{S_{X}^{z}\cap S_{Y}^{z}}(z),\)
where 
\[
  S_{X}^{z}  \coloneqq\{w\in \re^n \mid \scal{w-P_X(z)}{z - P_X(z)}\leq 0\},
 \]
 and
 \[
  S_{Y}^{z} \coloneqq\{w\in \re^n \mid \scal{w-P_Y(z)}{z - P_Y(z)}\leq 0\}.
 \]
  
\end{lemma}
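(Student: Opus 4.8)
\emph{Proof proposal.} The plan is to identify the circumcenter $\pCRMOp(z)$ with the point $\bar w:=P_{S_X\cap S_Y}(z)$ by showing that $\bar w$ lies in the affine hull $\aff\{z,R_X(z),R_Y(z)\}$ and is equidistant from the three vertices $z$, $R_X(z)$, $R_Y(z)$; the uniqueness of the circumcenter then forces $\bar w=\pCRMOp(z)$.

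First I would set up notation and dispose of the easy case. Write $p:=P_X(z)$ and $q:=P_Y(z)$, so that $R_X(z)-z=2(p-z)$ and $R_Y(z)-z=2(q-z)$, and hence $\aff\{z,R_X(z),R_Y(z)\}=z+\operatorname{span}\{z-p,\,z-q\}$. A key elementary observation is that, when $z\notin X$, the hyperplane $H_X:=\bound S_X$ is exactly the perpendicular bisector of the segment $[z,R_X(z)]$ (it passes through the midpoint $p$ and is orthogonal to $R_X(z)-z$), so $w\in H_X\iff\norm{w-z}=\norm{w-R_X(z)}$; similarly for $H_Y$ and $[z,R_Y(z)]$. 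By the variational characterization of the projection, $X\subseteq S_X$ and $Y\subseteq S_Y$, whence $\emptyset\ne X\cap Y\subseteq S_X\cap S_Y$ and $\bar w$ is well defined. If $z\in X\cup Y$ the claim is immediate: e.g. for $z\in X\setminus Y$ we have $S_X=\re^n$, $\pCRMOp(z)=\circum\{z,z,R_Y(z)\}$ is the midpoint $q$ of $[z,R_Y(z)]$, and since $z\notin S_Y$ one checks $P_{S_Y}(z)=q$ as well; the cases $z\in Y\setminus X$ and $z\in X\cap Y$ are analogous. Henceforth assume $z\notin X\cup Y$, so that $z\notin S_X$, $z\notin S_Y$ and $\norm{z-p},\norm{z-q}>0$.

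The heart of the argument is a KKT analysis of the projection onto the intersection of two halfspaces. There exist multipliers $\alpha,\beta\ge 0$ with
\[
\bar w=z-\alpha(z-p)-\beta(z-q),\qquad \alpha\,\scal{\bar w-p}{z-p}=0,\qquad \beta\,\scal{\bar w-q}{z-q}=0 ,
\]
and the crucial point is that \emph{both} multipliers are positive. Indeed, if $\alpha=0$ then, since $\bar w\in S_Y$ and $z\notin S_Y$, complementary slackness forces $\beta=1$ and $\scal{\bar w-q}{z-q}=0$, hence $\bar w=q$; but the centralization inequality gives $\scal{z-p}{z-q}\le 0$, so
\[
\scal{\bar w-p}{z-p}=\scal{q-p}{z-p}=\norm{z-p}^2-\scal{z-q}{z-p}\ge\norm{z-p}^2>0 ,
\]
contradicting $\bar w\in S_X$. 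Thus $\alpha>0$, and symmetrically $\beta>0$; complementary slackness then yields $\scal{\bar w-p}{z-p}=0$ and $\scal{\bar w-q}{z-q}=0$, i.e.\ $\bar w\in H_X\cap H_Y$.

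To finish, from $z-\bar w=\alpha(z-p)+\beta(z-q)\in\operatorname{span}\{z-p,z-q\}$ we obtain $\bar w\in\aff\{z,R_X(z),R_Y(z)\}$, and $\bar w\in H_X\cap H_Y$ gives, via the perpendicular-bisector observation, $\norm{\bar w-z}=\norm{\bar w-R_X(z)}=\norm{\bar w-R_Y(z)}$; hence $\bar w=\circum\{z,R_X(z),R_Y(z)\}=\pCRMOp(z)$. (The same inequality $\scal{q-p}{z-p}>0$ also rules out $z-p$ and $z-q$ being antiparallel when $S_X\cap S_Y\ne\emptyset$, so the three vertices are affinely independent and the circumcenter is unambiguous.) The expected main obstacle is precisely the positivity $\alpha,\beta>0$: this is the only place the centralization hypothesis enters, and without it the projection of $z$ onto $S_X\cap S_Y$ could land on just one of the hyperplanes $H_X,H_Y$, destroying the equidistance from all three vertices.
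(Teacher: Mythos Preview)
Your proof is correct, and its core coincides with the paper's: both use the KKT conditions for the projection onto $S_X\cap S_Y$ together with the centralization hypothesis to rule out the degenerate situation where only one of the two constraints is active, thereby forcing $\bar w\in H_X\cap H_Y$. Your contradiction at $\alpha=0$ (showing $q\notin S_X$ via $\scal{q-p}{z-p}\ge\norm{z-p}^2>0$) is the mirror image of the paper's contradiction at $P_{S_X\cap S_Y}(z)=P_X(z)$.

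The genuine difference lies in how each argument links the circumcenter to $P_{H_X\cap H_Y}(z)$. The paper reduces to a previously published lemma (\cite[Lemma~3]{Behling:2020a}) by the substitution $\hat z=R_{H_X}(z)$, turning the parallel-reflection triple into a sequential one, and then uses that $R_{H_X}$ is an isometry fixing $H_X\cap H_Y$. You instead read the identification off directly from two ingredients already on the table: the KKT stationarity equation $z-\bar w=\alpha(z-p)+\beta(z-q)$ gives membership in $\aff\{z,R_X(z),R_Y(z)\}$ for free, and the perpendicular-bisector description of $H_X$ and $H_Y$ gives equidistance. This makes your proof entirely self-contained, whereas the paper's relies on an external result; conversely, the paper's route makes the connection to the sequential CRM theory explicit. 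Your parenthetical on well-definedness is also correct: if $z-p$ and $z-q$ were antiparallel, $S_X$ and $S_Y$ would be opposite half-spaces whose intersection is nonempty only when $\scal{q-p}{z-p}\le 0$, contradicting the strict inequality you already derived.
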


\begin{proof}
We start proving the statement for the case in which $z\in \re^n$ lies in one of the sets. Assume, without loss of generality, that $z\in X$, \emph{i.e.}, $z = P_{X}(z) = R_{X}(z)$.  In this case,  $S_{X}^{z} = \re^n$ and $ \pCRMOp(z) = \circum\{z, z, R_{Y}(z)\} = \tfrac{1}{2}\left( z+ R_{Y}(z)\right) = P_{Y}(z)$. Note that $P_{Y}(z) $ is precisely  $P_{S_{Y}^{z}}(z) = P_{\re^n\cap S_{Y}^{z}}(z) = P_{S_{X}^{z}\cap S_{Y}^{z}}(z)  $. 

Now, assume that $z$ is neither in $X$ nor in $Y$, \emph{i.e.}, $z$ is strictly centralized. Thus, both $ S_{X}^{z}$ and $ S_{Y}^{z}$ are actual  half-spaces, because $z - P_X(z)\neq 0$ and $z - P_Y(z)\neq 0$, and  also $z$ is neither in $S_{X}^{z}$ nor $S_{Y}^{z}$.   First, we are going to establish that $z_{\PCRM}$ is the projection of $z$ onto the intersection of the hyperplanes 
\[
  H_{X}^{z}  \coloneqq\{w\in \re^n \mid \scal{w-P_X(z)}{z - P_X(z)}= 0\},
 \]
 and
 \[
  H^{z}_{Y} \coloneqq\{w\in \re^n \mid \scal{w-P_Y(z)}{z - P_Y(z)}= 0\},
 \]
the boundaries of $S^{z}_{X}$ and $S^{z}_{Y}$, respectively. We have $H^{z}_X\cap H^{z}_Y\neq \emptyset$, otherwise the hyperplanes would be parallel and $z$ would be a convex combination of $ P_X(z)$ and $ P_Y(z)$, because $z$ is strictly centralized. In this case,  the half-spaces $ S^{z}_{X}$ and $ S^{z}_{Y}$ would have empty intersection, a contradiction with the facts that $X \cap Y \neq \emptyset$ and $X\subset S^{z}_{X}$ and $Y\subset S^{z}_{Y}$. Then, $ z$, $R_{X}(z)$ and  $R_{Y}(z)$ are not collinear and the circumcenter $\pCRMOp(z)$ is well-defined.  
Moreover, $R_{X}(z)=R_{H^{z}_{X}}(z)$ and  $R_{Y}(z)=R_{H^{z}_{Y}}(z)$. Hence, $\pCRMOp(z)=\circum\{z, R_{H^{z}_X}(z), R_{H^{z}_Y}(z)\}$. Note further that by denoting $\hat z=R_{H^{z}_X}(z)$, we have $z=R_{H^{z}_X}(\hat z)$ and $R_{H^{z}_Y}(z)=R_{H^{z}_Y}R_{H^{z}_X}(\hat z)$, so $\pCRMOp(z)=\circum\{\hat z, R_{H^{z}_X}(\hat z), R_{H^{z}_Y}R_{H^{z}_X}(\hat z)\}$. We do this in order to employ \cite[Lemma 3]{Behling:2021b}, which gives us $\pCRMOp(z)=P_{H^{z}_X\cap H^{z}_Y}(\hat z)$. The fact that $R_{H^{z}_X}$ is an isometry and that $H^{z}_X\cap H^{z}_Y$ is an affine subspace  imply that $\pCRMOp(z)=P_{H^{z}_X\cap H^{z}_Y}(z)$. Since $z$ is neither in $S^{z}_{X}$ nor $S^{z}_{Y}$, $P_{S^{z}_{X}\cap S^{z}_{Y}}(z)$ must lie in the boundary of 
$S^{z}_{X} \cap S^{z}_{Y}$, which on the other hand consists of points that are either in $H^{z}_X$ or in $H^{z}_X$. Therefore, three possibilities  for $P_{S^{z}_{X}\cap S^{z}_{Y}}(z)$ arise: $P_{S^{z}_{X}\cap S^{z}_{Y}}(z)=P_{H^{z}_X}(z)$, $P_{S^{z}_{X}\cap S^{z}_{Y}}(z)=P_{H^{z}_Y}(z)$ or $P_{S^{z}_{X}\cap S^{z}_{Y}}(z)=P_{H^{z}_X\cap H^{z}_Y}(z)$. 

Suppose that $P_{S^{z}_{X}\cap S^{z}_{Y}}(z)=P_{H^{z}_X}(z)$. In particular, $P_{H^{z}_X}(z)\in S^{z}_Y$. Bearing in mind that $P_{H^{z}_X}(z) = P_{X}(z)$ we get
\begin{align}0&\geq \scal{P_{X}(z)-P_Y(z)}{z - P_Y(z)}\\&=\scal{P_{X}(z)-z}{z - P_Y(z)}+\scal{z-P_{Y}(z)}{z - P_Y(z)}\\&=-\scal{P_{X}(z)-z}{P_Y(z)-z}+\|z-P_{Y}(z)\|^2\\&> -\scal{P_{X}(z)-z}{P_Y(z)-z},
\end{align} which contradicts the hypothesis that $z$ is centralized. If we assume that $P_{S^{z}_{X}\cap S^{z}_{Y}}(z)=P_{H^{z}_Y}(z)$, then we get a similar contradiction. Thus, $P_{S^{z}_{X}\cap S^{z}_{Y}}(z)=P_{H^{z}_X\cap H^{z}_Y}(z)$, proving the lemma.
\end{proof}



We are going to state now a Fejér-type property regarding a step from $z\in \re^n$ to the centralized point $z_{\Cent}$. This property is quite natural, if one bears in mind that our centralization procedure comes from the composition of SPM and MAP.

\begin{lemma}[Firm quasi-nonexpansiveness of the centralization procedure]\label[lemma]{lemma-quasinonexp-Cent}
 Let $X,Y\subset \re^n$ be two given closed convex sets with nonempty intersection. Then, for any  $z\in \re^n$, the centralized procedure $z_{\Cent}$ given in \cref{eq:CentralizationStep} satisfies 
\[\label{eq:lemmaqnonexpans1}\|z_{\Cent}-s\|^2 \le \|z_{\MAP}-s\|^2 -\tfrac{1}{2}\|z_{\MAP}-P_{X}(z_{\MAP})\|^2, \] 
and
\[\label{eq:lemmaqnonexpans2}\|z_{\Cent}-s\|^2 \le \|z-s\|^2-\tfrac{1}{4}\|z-z_{\Cent}\|^2, \] 
for all $s\in X\cap Y$.
\end{lemma}
\begin{proof}
For any $s\in X\cap Y$, we have
\begin{align}\|z_{\Cent}-s\|^2 &=\|\tfrac{1}{2} \left( z_{\MAP} + P_{X}(z_{\MAP}) \right)-s\|^2
=\|\tfrac{1}{2} ( z_{\MAP} - s) + \tfrac{1}{2}(P_{X}(z_{\MAP})- s) \|^2\\&= \tfrac{1}{2}\|  z_{\MAP} - s\|^2+\tfrac{1}{2}\|P_{X}(z_{\MAP})- s\|^2-\tfrac{1}{4}\|( z_{\MAP} - s) - (P_{X}(z_{\MAP})- s)\|^2\\&\le \tfrac{1}{2}\|  z_{\MAP} - s\|^2+\tfrac{1}{2}\|P_{X}(z_{\MAP})- P_X(s)\|^2
\\&\le \|z_{\MAP}-s\|^2-\tfrac{1}{2}\|z_{\MAP}-P_{X}(z_{\MAP})\|^2\\
&\le  \|z-s\|^2-\tfrac{1}{2}\|z-z_{\MAP}\|^2-\tfrac{1}{2}\|z_{\MAP}-P_{X}(z_{\MAP})\|^2\\
&=  \|z-s\|^2-\tfrac{1}{2}\|z-z_{\MAP}\|^2-2\|\tfrac{1}{2}(z_{\MAP}-P_{X}(z_{\MAP}))\|^2\\
&=  \|z-s\|^2-\tfrac{1}{2}\|z-z_{\MAP}\|^2-2\|z_{\Cent}-z_{\MAP}\|^2\label{Eq-final-use}.
\end{align} In the first equality we use the definition of $z_{\Cent}$ and the second one is obvious. The third equality follows from the identity in \cite[Corollary 2.15]{Bauschke:2017a}. In the first inequality we take into account the fact that $s=P_X(s)$, and the nonnegativity of the last term. The second inequality follows from the firm nonexpasiviness of projections, and we get \cref{eq:lemmaqnonexpans1}. In the last inequality we use \cite[Proposition 4.35(iii)]{Bauschke:2017a}, since the projections are $1/2$-averaged. 
Finally, in the last equality we use again the definition of $z_{\Cent}$.

Note further that \cite[Corollary 2.15]{Bauschke:2017a} gives us
\begin{align}\tfrac{1}{2}\|z-z_{\MAP}\|^2+\tfrac{1}{2}\|z_{\Cent}-z_{\MAP}\|^2&=\|\tfrac{1}{2}(z-z_{\MAP})+\tfrac{1}{2}(z_{\Cent}-z_{\MAP})\|^2+\tfrac{1}{4}\|(z-z_{\MAP})-(z_{\Cent}-z_{\MAP})\|^2\\&\ge\tfrac{1}{4}\|z-z_{\Cent}\|^2,\end{align}
or, equivalently,
\begin{align}\tfrac{1}{2}\|z-z_{\MAP}\|^2&\ge\tfrac{1}{4}\|z-z_{\Cent}\|^2-\tfrac{1}{2}\|z_{\Cent}-z_{\MAP}\|^2.\end{align}
Combining the last inequality with \cref{Eq-final-use}, we get
\begin{align}\|z_{\Cent}-s\|^2 &\le  \|z-s\|^2-\tfrac{1}{4}\|z-z_{\Cent}\|^2+\tfrac{1}{2}\|z_{\Cent}-z_{\MAP}\|^2 -2\|z_{\Cent}-z_{\MAP}\|^2\\&\leq  \|z-s\|^2-\tfrac{1}{4}\|z-z_{\Cent}\|^2.
\end{align}
This proves \cref{eq:lemmaqnonexpans2}, and hence the lemma.
\end{proof}

The next lemma establishes a result similar to the previous one, but now concerning pCRM steps  taken from centralized points.

\begin{lemma}[Firm quasi-nonexpansiveness of circumcenters at centralized points]\label[lemma]{lemma-quasinonexpCRM}
 Let $X,Y\subset \re^n$ be two closed convex sets with nonempty intersection. Assume that  $z\in \re^n$ is a centralized point with respect to $X,Y$. Then, the parallel circumcenter at $z$, namely $\pCRMOp(z)\coloneqq \circum\{z, R_{X}(z), R_{Y}(z)\}$,  satisfies
\[\label{eq:lemmaqnonexpansCRM1}\|\pCRMOp(z)-s\|^2 \le \|z-s\|^2-\|z-\pCRMOp(z)\|^2,
\] for all $s\in X\cap Y$.
\end{lemma}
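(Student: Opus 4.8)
The plan is to leverage Lemma~\ref{lemma:ZcCharacterization}, which tells us that for a centralized point $z$ we have $\pCRMOp(z) = P_{S_X\cap S_Y}(z)$, where $S_X, S_Y$ are the half-spaces supporting $X$ and $Y$ at $P_X(z)$ and $P_Y(z)$ respectively. The crucial observation is that $X \subseteq S_X$ and $Y \subseteq S_Y$ — this is just the supporting-hyperplane property, i.e., the characterization of the projection: for any $x\in X$, $\scal{x - P_X(z)}{z - P_X(z)}\le 0$, and similarly for $Y$. Consequently $X\cap Y \subseteq S_X\cap S_Y$, so any $s\in X\cap Y$ also lies in $S_X\cap S_Y$.

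First I would dispose of the degenerate case: if $z\in X\cup Y$, say $z\in X$, then $\pCRMOp(z) = P_Y(z)$ (as noted in the proof of Lemma~\ref{lemma:ZcCharacterization}), and the inequality reduces to the standard firm nonexpansiveness of $P_Y$ relative to points of $Y\supseteq X\cap Y$, namely $\|P_Y(z) - s\|^2 \le \|z-s\|^2 - \|z - P_Y(z)\|^2$. In the main case, with $\pCRMOp(z) = P_{C}(z)$ for the nonempty closed convex set $C \coloneqq S_X\cap S_Y$, I invoke the firm nonexpansiveness (equivalently, the Pythagorean/obtuse-angle) inequality for projections onto convex sets: for every $s\in C$,
\[
\|P_C(z) - s\|^2 + \|z - P_C(z)\|^2 \le \|z - s\|^2,
\]
which is exactly \cite[Proposition 4.16 or Theorem 3.16]{Bauschke:2017}. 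Since $s\in X\cap Y\subseteq C$, this applies and gives precisely \cref{eq:lemmaqnonexpansCRM1}.

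The only genuine content beyond citing the characterization lemma is the inclusion $X\cap Y\subseteq S_X\cap S_Y$, which I would state explicitly and justify from the projection characterization, and the bookkeeping to make sure $C$ is nonempty (it contains $X\cap Y\neq\emptyset$) so that $P_C$ is well defined. I do not anticipate a real obstacle here: the lemma is essentially a corollary of Lemma~\ref{lemma:ZcCharacterization} combined with the firm nonexpansiveness of projections onto convex sets. The one point requiring a little care is ensuring the characterization lemma genuinely yields $\pCRMOp(z)=P_C(z)$ in \emph{all} centralized cases including the boundary ones where $z\in X$ or $z\in Y$ — but that case is handled separately above, and in fact the lemma's statement already covers it, so invoking it directly suffices.
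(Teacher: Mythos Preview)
Your proposal is correct and follows essentially the same route as the paper: invoke \Cref{lemma:ZcCharacterization} to write $\pCRMOp(z)=P_{S_X\cap S_Y}(z)$, observe that $X\cap Y\subseteq S_X\cap S_Y$ so that $s=P_{S_X\cap S_Y}(s)$, and then apply firm nonexpansiveness of the projection onto $S_X\cap S_Y$. The paper phrases the last step via the identity $\|P_C z - P_C s\|^2 \le \|z-s\|^2 - \|(z-P_C z)-(s-P_C s)\|^2$ and then simplifies, whereas you cite the equivalent Pythagorean form directly; the content is the same.
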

\begin{proof}  

Consider $S^{z}_{X}$ and  $S^{z}_{Y}$ as in \Cref{lemma:ZcCharacterization}. Now, for any $s\in X\cap Y$, we have 
\begin{align}\|\pCRMOp(z)-s\|^2 &=\|P_{S^{z}_X\cap S^{z}_Y}(z)-s\|^2\\
&=\|P_{S^{z}_X\cap S^{z}_Y}(z)-P_{S^{z}_X\cap S^{z}_Y}(s)\|^2\\&\le \|z-s\|^2-\left \|(z-P_{S^{z}_X\cap S^{z}_Y}(z))-(s-P_{S^{z}_X\cap S^{z}_Y}(s))\right\|^2 \\
&= \|z-s\|^2-\|z-\pCRMOp(z)\|^2,
\end{align}
where the first equality follows from \Cref{lemma:ZcCharacterization}, because $z$ is centralized. The second equality follows from the fact that $s\in X\cap Y\subset S^{z}_X\cap S^{z}_Y$. In the inequality, we invoke the firm nonexpansiveness of projections, and in the last equality, we use the fact that $s=P_{S^{z}_X\cap S^{z}_Y}(s)$, and \Cref{lemma:ZcCharacterization} again.  
\end{proof}

Finally, we can derive firm quasi-nonexpansiveness of full cCRM steps by bonding \cref{lemma-quasinonexp-Cent,lemma-quasinonexpCRM}.

\begin{lemma}[Firm quasi-nonexpansiveness of the centralized circumcentered-reflection operator]\label[lemma]{lemma-quasinonexp-pCRM-combined}
 Let $X,Y\subset \re^n$ be two closed convex sets with nonempty intersection. Let  $z\in \re^n$. Then, the parallel circumcenter at $z_{\Cent}\coloneqq  \tfrac{1}{2} \left( z_{\MAP} + P_{X}(z_{\MAP}) \right)$, namely 
 \[\label{eq:pCRM_operator}
  T(z)\coloneqq \pCRMOp(z_{\Cent})=\circum\{z_{\Cent}, R_{X}(z_{\Cent}), R_{Y}(z_{\Cent})\},\]  satisfies
\[\label{eq:lemmaqnonexpansCRM}\|T(z)-s\|^2 \le\|z-s\|^2-\tfrac{1}{8}\|z-T(z)\|^2,
\] for all $s\in X\cap Y$.
\end{lemma}
\begin{proof} By \Cref{Lemma:CentralProcedure},  $z_{\Cent}$ is centralized. Therefore,  
\Cref{lemma-quasinonexpCRM} can be applied and implies that
\begin{align}\|\pCRMOp(z_{\Cent})-s\|^2 &\le \|z_{\Cent}-s\|^2-\|z_{\Cent}-\pCRMOp(z_{\Cent})\|^2,
\end{align} for all $s\in X\cap Y$. Now, using \Cref{lemma-quasinonexp-Cent} in the previous inequality, we have
\begin{align}\|\pCRMOp(z_{\Cent})-s\|^2 &\le \|z-s\|^2-\tfrac{1}{4}\|z-z_{\Cent}\|^2-\|z_{\Cent}-\pCRMOp(z_{\Cent})\|^2.\label{Eq-final-useb}
\end{align}
Note further that \cite[Corollary 2.15]{Bauschke:2017a} gives us
\begin{align}\tfrac{1}{2}\|z-z_{\Cent}\|^2+\tfrac{1}{2}\|z_{\Cent}-\pCRMOp(z_{\Cent})\|^2&=\|\tfrac{1}{2}(z-z_{\Cent})+\tfrac{1}{2}(z_{\Cent}-\pCRMOp(z_{\Cent}))\|^2+\tfrac{1}{4}\|(z-z_{\Cent})-(z_{\Cent}-\pCRMOp(z_{\Cent}))\|^2\\&\ge \tfrac{1}{4}\|z-\pCRMOp(z_{\Cent})\|^2 ,\end{align}
which can be written as 
\begin{align}\tfrac{1}{4}\|z-z_{\Cent}\|^2&\ge\tfrac{1}{8}\|z-\pCRMOp(z_{\Cent})\|^2-\tfrac{1}{4}\|z_{\Cent}-\pCRMOp(z_{\Cent})\|^2.\end{align}
Combining the last inequality with \cref{Eq-final-useb}, we obtain
\begin{align}\|\pCRMOp(z_{\Cent})-s\|^2 &\le \|z-s\|^2-\tfrac{1}{8}\|z-\pCRMOp(z_{\Cent})\|^2+\tfrac{1}{4}\|z_{\Cent}-\pCRMOp(z_{\Cent})\|^2-\|z_{\Cent}-\pCRMOp(z_{\Cent})\|^2\\&=\|z-s\|^2-\tfrac{1}{8}\|z-\pCRMOp(z_{\Cent})\|^2-\tfrac{3}{4}\|z_{\Cent}-\pCRMOp(z_{\Cent})\|^2\\&\le \|z-s\|^2-\tfrac{1}{8}\|z-\pCRMOp(z_{\Cent})\|^2,
\end{align} 
proving the lemma. 
\end{proof} 

Next,  we show that  $X\cap Y$ is precisely the  set of fixed points of the operator $\pCRMOp: \dom (\pCRMOp) \subset \re^n\to\re^n$, when $X\cap Y\neq \emptyset$.
Here, $\dom (\pCRMOp)$ consists of the points of $\RR^n$ for which 
\cref{eq:pCRM-iteration} is well-defined, i.e.,   $\dom (\pCRMOp) \coloneqq \{z\in \RR^n\mid z, R_{X}(z),\text{ and } R_{Y}(z)\text{ are not collinear}\}$.

\begin{lemma}[Fixed points of the parallel circumcenter operator]\label[lemma]{lemma-FixpCRM}
  Let $X,Y\subset \re^n$ be two given closed convex sets with nonempty intersection. Consider the parallel circumcenter operator   $\pCRMOp: \dom (\pCRMOp) \subset \re^n\to\re^n$ defined as $\pCRMOp(z)\coloneqq \circum\{z, R_{X}(z), R_{Y}(z)\} $. Let  $\Fix \pCRMOp \coloneqq \{z\in \RR^n \mid \pCRMOp(z) = z\}$ be the set of its fixed points. Then,
   \[
     \Fix \pCRMOp = X\cap Y.  
   \]
  \end{lemma}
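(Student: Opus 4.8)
The statement is the fixed-point characterization $\Fix \pCRMOp = X \cap Y$ for the parallel circumcenter operator. The plan is to prove the two inclusions separately, and the easy one first.

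First I would verify $X \cap Y \subseteq \Fix \pCRMOp$. If $z \in X \cap Y$, then $z = P_X(z) = P_Y(z)$, hence $R_X(z) = R_Y(z) = z$, so the three points $z, R_X(z), R_Y(z)$ coincide. By the convention used for the circumcenter of a degenerate triangle (all vertices equal), $\circum\{z,z,z\} = z$, so $z \in \dom(\pCRMOp)$ and $\pCRMOp(z) = z$. This direction is essentially immediate from the definitions.

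For the reverse inclusion $\Fix \pCRMOp \subseteq X \cap Y$, suppose $z \in \dom(\pCRMOp)$ with $\pCRMOp(z) = z$. The natural route is to exploit that every point of $X$ or $Y$ is centralized (as remarked right after \Cref{d1}), but $z$ need not be centralized a priori, so I would argue directly from the defining property of the circumcenter: $\pCRMOp(z)$ is equidistant from $z$, $R_X(z)$ and $R_Y(z)$. From $\pCRMOp(z) = z$ we get $\|z - z\| = \|z - R_X(z)\| = \|z - R_Y(z)\|$, hence $R_X(z) = z$ and $R_Y(z) = z$; since $R_X(z) = 2P_X(z) - z$, this forces $P_X(z) = z$, i.e. $z \in X$, and likewise $z \in Y$. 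Therefore $z \in X \cap Y$. (Alternatively, one could note that if $z$ happens to be centralized, \Cref{lemma:ZcCharacterization} gives $\pCRMOp(z) = P_{S_X \cap S_Y}(z)$, and $\pCRMOp(z) = z$ would mean $z \in S_X \cap S_Y$, forcing $\scal{z - P_X(z)}{z - P_X(z)} \le 0$, hence $z = P_X(z) \in X$, and similarly $z \in Y$; but the direct equidistance argument avoids needing to establish centralization first.)

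The only subtle point is bookkeeping about the domain and the degenerate-circumcenter convention: one must make sure that whenever $z, R_X(z), R_Y(z)$ are not all distinct the value $\circum\{z,R_X(z),R_Y(z)\}$ is still defined in the way the paper intends (in particular $\circum\{z,z,z\}=z$ and $\circum\{z,z,w\}=\tfrac12(z+w)$), and that the equidistance property I invoke continues to hold in those cases. I expect this to be the main — though minor — obstacle; the geometric heart of the argument, that a fixed point must reflect to itself in both sets and therefore lie in both, is short. Finally I would combine the two inclusions to conclude $\Fix \pCRMOp = X \cap Y$.
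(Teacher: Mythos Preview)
Your proof is correct and follows essentially the same approach as the paper: both directions are handled exactly as the authors do, with the forward inclusion being immediate and the reverse inclusion coming from the equidistance property of the circumcenter, which forces $R_X(z)=z$ and $R_Y(z)=z$ once $\pCRMOp(z)=z$. Your extra care about degenerate-circumcenter conventions and the alternative route via \Cref{lemma:ZcCharacterization} are fine observations, but the paper's proof simply invokes the equidistance definition directly, just as your main argument does.
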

 \begin{proof}
  If $z\in  X\cap Y$, it is easy to see that $z \in \Fix \pCRMOp$. Now, suppose $z\in  \Fix \pCRMOp$. By \Cref{def:circum} of circumcenter we get that 
  \[
    \norm{z - \pCRMOp(z) } = \norm{R_X(z) -\pCRMOp(z)}  =   \norm{R_Y(z) -\pCRMOp(z)}.  
  \]
  By definition of fixed points, we have $z =  \pCRMOp(z)$, which gives us $ R_X(z) = \pCRMOp(z) = z$ and   $ R_Y(z) = \pCRMOp(z) = z$. Therefore,  because $ \Fix R_X = X$ and $\Fix R_Y = Y$, we get $z\in X\cap Y$, as required. 
 \end{proof}

\begin{lemma}[Fixed points of the centralized circumcentered-reflection operator]\label[lemma]{lemma-FixCirc-Cent}
 Let $X,Y\subset \re^n$ be two closed convex sets with nonempty intersection. Consider the centralized circumcentered-reflection operator $T :\re^n\to\re^n$ at $z\in\re^n$ defined, as in \Cref{lemma-quasinonexp-pCRM-combined}, 
 and the set of fixed points of $T$, namely, $\Fix T \coloneqq \{z\in \re^n\mid T(z) = z\}$. Then, 
  \[
    \Fix T = X\cap Y.  
  \]
 \end{lemma}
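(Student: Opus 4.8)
The plan is to prove the two inclusions $X\cap Y \subseteq \Fix T$ and $\Fix T \subseteq X\cap Y$ separately, leaning heavily on \Cref{lemma-FixpCRM} and the firm quasi-nonexpansiveness estimate from \Cref{lemma-quasinonexp-pCRM-combined}.

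For the easy inclusion, I would take $z\in X\cap Y$. Then $P_X(z)=z$, so $z_{\MAP}=P_YP_X(z)=P_Y(z)=z$ (since $z\in Y$ too), and hence $z_{\Cent}=\tfrac12(z_{\MAP}+P_X(z_{\MAP}))=\tfrac12(z+z)=z$. Since $z\in X\cap Y$, \Cref{lemma-FixpCRM} gives $\pCRMOp(z)=z$, and therefore $T(z)=\pCRMOp(z_{\Cent})=\pCRMOp(z)=z$, i.e. $z\in\Fix T$.

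For the reverse inclusion, let $z\in\Fix T$, so $T(z)=z$. The cleanest route is to invoke \Cref{lemma-quasinonexp-pCRM-combined}, which applies to \emph{any} $z\in\re^n$: for every $s\in X\cap Y$ (the set is nonempty by hypothesis) we have
\[
\|T(z)-s\|^2 \le \|z-s\|^2 - \tfrac18\|z-T(z)\|^2.
\]
Since $T(z)=z$, the left side equals $\|z-s\|^2$, forcing $\tfrac18\|z-T(z)\|^2\le 0$ — which is automatic and gives no information. So the firm quasi-nonexpansiveness alone is not enough; I need to exploit the structure of $T$ directly. Instead, from $T(z)=z$ I would unwind the definition in stages. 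First, apply \Cref{lemma-quasinonexp-Cent}: since $z_{\Cent}$ is centralized (by \Cref{Lemma:MAPcentral}), for any $s\in X\cap Y$,
\[
\|z_{\Cent}-s\|^2 \le \|z-s\|^2 - \tfrac14\|z-z_{\Cent}\|^2,
\]
and apply \Cref{lemma-quasinonexpCRM} at the centralized point $z_{\Cent}$:
\[
\|\pCRMOp(z_{\Cent})-s\|^2 \le \|z_{\Cent}-s\|^2 - \|z_{\Cent}-\pCRMOp(z_{\Cent})\|^2.
\]
Chaining these, $\|T(z)-s\|^2 \le \|z-s\|^2 - \tfrac14\|z-z_{\Cent}\|^2 - \|z_{\Cent}-T(z)\|^2$. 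Now $T(z)=z$ makes the left side $\|z-s\|^2$, so both nonnegative terms vanish: $\|z-z_{\Cent}\|=0$ and $\|z_{\Cent}-T(z)\|=0$. Hence $z=z_{\Cent}$ and $\pCRMOp(z_{\Cent})=z_{\Cent}$. The latter says $z_{\Cent}\in\Fix\pCRMOp$, so by \Cref{lemma-FixpCRM} we get $z_{\Cent}\in X\cap Y$, and since $z=z_{\Cent}$ we conclude $z\in X\cap Y$, completing the proof.

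The only subtle point — and the step I would flag as the main obstacle — is recognizing that the global estimate \eqref{eq:lemmaqnonexpansCRM} is \emph{too lossy} to be used as a black box here (it collapses to a tautology at a fixed point), whereas retaining the two intermediate slack terms $\tfrac14\|z-z_{\Cent}\|^2$ and $\|z_{\Cent}-\pCRMOp(z_{\Cent})\|^2$ from \Cref{lemma-quasinonexp-Cent,lemma-quasinonexpCRM} does the job. Everything else is routine: the forward inclusion is a direct computation, and once both slack terms are killed the reduction to \Cref{lemma-FixpCRM} is immediate. I would also remark in passing that this shows $\Fix T = \Fix \pCRMOp$, consistent with \Cref{lemma-FixpCRM}.
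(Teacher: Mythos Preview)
Your proof is correct and follows essentially the same route as the paper: both arguments reduce the reverse inclusion to \Cref{lemma-FixpCRM} by combining the slack from \Cref{lemma-quasinonexp-Cent} with the slack from \Cref{lemma-quasinonexpCRM}. The only cosmetic difference is that the paper argues by a case split on whether $z=z_{\Cent}$ (deriving a contradiction from \Cref{lemma-quasinonexpCRM} and the bare inequality $\|z_{\Cent}-s\|\le\|z-s\|$ when $z\neq z_{\Cent}$), whereas you chain the two firm quasi-nonexpansiveness estimates in one line and read off $z=z_{\Cent}$ and $\pCRMOp(z_{\Cent})=z_{\Cent}$ simultaneously; your version is slightly more streamlined but the content is identical.
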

 \begin{proof}
If $z\in  X\cap Y$, we clearly get that $z \in \Fix T$. Conversely, suppose $z\in \Fix T$.

Since by \cref{Lemma:CentralProcedure} $z_{\Cent}$ is centralized, \Cref{lemma-quasinonexpCRM} applies to it, that is, for any $s\in X\cap Y$, 
\[\|\pCRMOp(z_{\Cent})-s\|^2 \le \|z_{\Cent}-s\|^2-\|z_{\Cent}-\pCRMOp(z_{\Cent})\|^2.\]
Moreover, as $\pCRMOp(z_{\Cent}) = T(z) = z$, we get 
\begin{align}\|z-s\|^2 &  \le \|z_{\Cent}-s\|^2-\|z_{\Cent}-z\|^2  \\
&  \le \|z-s\|^2-\tfrac{1}{4}\|z-z_{\Cent}\|^2-\|z_{\Cent}-z\|^2,
\end{align}
where in the second inequality we used 
\cref{eq:lemmaqnonexpans2}. Thus,  $\|z_{\Cent}-z\| = 0$, \emph{i.e.}, $z_{\Cent}=z$. In this case,    \[z=T(z) = \circum\{z_{\Cent}, R_{X}(z_{\Cent}), R_{Y}(z_{\Cent})\}= \circum\{z, R_{X}(z), R_{Y}(z) \} = \pCRMOp(z)\] and so $ z\in \Fix \pCRMOp$. Hence, \Cref{lemma-FixpCRM} gives us that $z\in  X\cap Y$. 
\end{proof}

Before arriving  at the main result of our paper,  we recall the notion of Fej\'er monotonicity.

\begin{definition}[Fejér monotonicity]
A sequence $(w^k)_{k\in \na}\subset\re^n$ is \emph{Fej\'er monotone} with respect to a set $M\subset\re^n$ when 
$\lV w^{k+1}-w\rV\le\lV w^k-w\rV$, for all $w\in M$ and for all $k\in \NN$.
\end{definition}

We will now state that if we iterate the cCRM operator $T$, with any choice of initial point in $\re^n$, we end up with a sequence whose limit point exists and belongs to $X\cap Y$. In other words, cCRM solves problem \cref{eq:CFP}. The convergence is derived upon the Fejér monotonicity with respect to $X\cap Y$ of the sequence generated by cCRM, which is directly implied by \Cref{lemma-quasinonexp-pCRM-combined}.

\begin{theorem}[Convergence of cCRM]\label{thm:pCRM-centralized}
  Let $X,Y\subset \re^n$ be two closed convex sets with nonempty intersection. Then, for any starting point   $z^0\in \re^n$, the sequence defined by $z^{k+1} \coloneqq  T(z^k) = \pCRMOp(z_{\Cent}^{k})=\circum\{z_{\Cent}^{k}, R_{X}(z_{\Cent}^{k}), R_{Y}(z_{\Cent}^{k})\}$, where   $z_{\Cent}^{k}\coloneqq\tfrac{1}{2} \left( z_{\MAP}^k + P_{X}(z_{\MAP}^k) \right)$  and $z_{\MAP}^k \coloneqq P_{Y}P_{X}( z^{k})$, is Fej\'er monotone with respect to  $X\cap Y$, and  converges to a point in $X\cap Y$.
\end{theorem}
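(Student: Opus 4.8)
The plan is to combine the Fej\'er-monotonicity furnished by \Cref{lemma-quasinonexp-pCRM-combined} with the fixed-point characterization in \Cref{lemma-FixCirc-Cent}, following the standard template for convergence proofs of quasi-nonexpansive iterations applied to a convex feasibility problem. First I would fix an arbitrary $s\in X\cap Y$ (nonempty by hypothesis) and apply \cref{eq:lemmaqnonexpansCRM} with $z = z^k$, obtaining
\[
\|z^{k+1}-s\|^2 \le \|z^k-s\|^2 - \tfrac{1}{8}\|z^k - z^{k+1}\|^2
\]
for every $k$. This immediately shows that $(\|z^k-s\|)_{k}$ is nonincreasing, hence convergent, so the sequence $(z^k)$ is bounded; moreover, telescoping the inequality yields $\sum_{k}\|z^k-z^{k+1}\|^2 < \infty$, and in particular $\|z^k - z^{k+1}\|\to 0$, i.e. $\|z^k - T(z^k)\|\to 0$.

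Next I would extract a convergent subsequence $z^{k_j}\to \bar z$ using boundedness. The key step is to show $\bar z\in X\cap Y$, equivalently $\bar z\in\Fix T$ by \Cref{lemma-FixCirc-Cent}. For this I need continuity of $T$ at $\bar z$: the maps $P_X$ and $P_Y$ are (firmly) nonexpansive hence continuous, so $z\mapsto z_{\MAP}$ and $z\mapsto z_{\Cent}$ are continuous; and the parallel circumcenter operator $\pCRMOp$ is continuous on centralized points — which can be seen either directly from the formula for $\circum$ or, cleanly, from \Cref{lemma:ZcCharacterization}, since there $\pCRMOp(z_{\Cent}) = P_{S_X\cap S_Y}(z_{\Cent})$ is a projection onto an intersection of two halfspaces whose defining data ($P_X(z_{\Cent})$, $P_Y(z_{\Cent})$, $z_{\Cent}$) vary continuously with $z$. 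Granting continuity, $T(z^{k_j})\to T(\bar z)$; combining with $\|z^{k_j} - T(z^{k_j})\|\to 0$ gives $T(\bar z) = \bar z$, so $\bar z\in X\cap Y$ by \Cref{lemma-FixCirc-Cent}.

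Finally I would upgrade subsequential convergence to full convergence. Since $\bar z\in X\cap Y$, I may take $s=\bar z$ in the Fej\'er inequality above; then $(\|z^k-\bar z\|)_k$ is nonincreasing and therefore convergent, while its subsequence $\|z^{k_j}-\bar z\|\to 0$. Hence the whole limit is $0$, i.e. $z^k\to\bar z\in X\cap Y$, which is precisely the claim.

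The main obstacle I anticipate is the continuity of the circumcenter step at centralized points — the raw definition of $\circum$ can degenerate when the three points become collinear (this is exactly the ill-definedness phenomenon the paper discusses for ordinary CRM). The cleanest route around it is to lean on \Cref{lemma:ZcCharacterization}: representing $\pCRMOp(z_{\Cent})$ as the projection onto $S_X\cap S_Y$ sidesteps collinearity entirely, since that projection is always well-defined and depends continuously on $z_{\Cent}$; one only needs the elementary fact that $z\mapsto P_{S_X\cap S_Y}(z)$ varies continuously when the halfspaces $S_X,S_Y$ vary continuously and keep a common point (which holds throughout because $X\cap Y\neq\emptyset$ and all iterates $z_{\Cent}^k$ are centralized by \Cref{Lemma:MAPcentral}). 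A careful write-up should make this continuity claim explicit rather than treat it as routine.
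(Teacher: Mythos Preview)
Your overall strategy is sound and does yield a valid proof, but it differs from the paper's in how cluster points are shown to lie in $X\cap Y$, and your justification of that step has a gap.

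\textbf{Where the approaches diverge.} Both you and the paper use \Cref{lemma-quasinonexp-pCRM-combined} to obtain Fej\'er monotonicity, boundedness, and $\|z^k-z^{k+1}\|\to 0$, and both finish by invoking Fej\'er convergence once every cluster point is known to lie in $X\cap Y$. The difference is how that last fact is established. You argue that $T$ is continuous, so $T(z^{k_j})\to T(\bar z)$, whence $\bar z\in\Fix T = X\cap Y$ by \Cref{lemma-FixCirc-Cent}. The paper instead first shows $z^{k+1}-z^k_{\Cent}\to 0$ (via \Cref{lemma-quasinonexpCRM}), so that $z_{\Cent}^{i_k}\to\hat z$ along the subsequence, and then uses only the \emph{equidistance} property defining the circumcenter, namely $\|z_{\Cent}^{i_k}-z^{i_k+1}\|=\|R_X(z_{\Cent}^{i_k})-z^{i_k+1}\|=\|R_Y(z_{\Cent}^{i_k})-z^{i_k+1}\|$, together with the trivial continuity of $R_X,R_Y$, to conclude $\hat z=R_X(\hat z)=R_Y(\hat z)$. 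This bypasses entirely the question of whether the circumcenter map itself is continuous.

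\textbf{The gap in your argument.} Your continuity claim for $\pCRMOp$ via ``projection onto continuously varying halfspaces'' is not routine at the degenerate case you must handle: when $z_{\Cent}\to\bar z_{\Cent}\in X\cap Y$, both normals $z_{\Cent}-P_X(z_{\Cent})$ and $z_{\Cent}-P_Y(z_{\Cent})$ tend to zero, so $S_X$ and $S_Y$ do not ``vary continuously'' in any standard sense---they collapse to $\re^n$. The conclusion is still \emph{true}, but it needs a different justification. By \Cref{Lemma:MAPcentral} the image of the centralization map lies in $\{\text{strictly centralized}\}\cup(X\cap Y)$; on the first piece $\pCRMOp$ is continuous from the explicit $2\times 2$ circumcenter formula (the three points are never collinear there when $X\cap Y\neq\emptyset$, as one checks by testing any $s\in X\cap Y$ against both supporting halfspaces); at a point $\bar z_{\Cent}\in X\cap Y$, continuity follows directly from \Cref{lemma-quasinonexpCRM} with $s=\bar z_{\Cent}$, which gives $\|\pCRMOp(z_{\Cent})-\bar z_{\Cent}\|\le\|z_{\Cent}-\bar z_{\Cent}\|$. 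With this patch your proof goes through. The paper's route is arguably cleaner precisely because it never needs continuity of $\pCRMOp$ at all.
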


\begin{proof}
  For all $s\in X\cap Y$ and $k \in \na$,  we get from \Cref{lemma-quasinonexp-pCRM-combined} 
  that
  \[ \frac{1}{8}\|z^{k+1} - z^{k}\|^2\le\|z^{k}-s\|^2-\|z^{k+1}-s\|^2.\label{eq:thm:pCRM-centralized2.2}
\]
and, therefore,  
  \[\label{eq:thm:pCRM-centralized2.1}
  \|z^{k+1}-s\|  \leq \|z^{k}-s\|.
\]
Inequality \cref{eq:thm:pCRM-centralized2.1} provides the Fej\'er monotonicity, with respect to $X\cap Y$, of  sequence $(z^k)_{k\in\na}$, and we get the first claim. Moreover, appealing to~\cite[Proposition 5.4(i)]{Bauschke:2017a},    $(z^k)_{k\in\na}$ is also bounded. To complete the proof, it suffices to show that every cluster point of $(z^k)_{k\in\na}$ belongs to $X\cap Y$, because then the Fejér monotonicity of $(z^k)_{k\in\na}$ implies its convergence to a point in  $X\cap Y$, taking into account \cite[Theorem 5.5]{Bauschke:2017a}.

We proceed to establish the claim. First note that using inequality \cref{eq:thm:pCRM-centralized2.2} and the fact that  sequence $(\|z^{k}-s\|)_{k\in \na}$ converges~\cite[Proposition 5.4(ii)]{Bauschke:2017a}, we conclude that   $z^{k+1}-z^{k}$ converges to $0$, as $k \rightarrow+\infty$. Furthermore,  using \Cref{lemma-quasinonexpCRM} and the nonexpansiveness of the centralized procedure $z_\Cent$ given in \Cref{lemma-quasinonexp-Cent}, we have
\begin{align}
  \|z^{k+1} - z_{\Cent}^k\|^2 
  & = \|\pCRMOp(z_{\Cent}^k) - z_{\Cent}^k\|^2\\ 
  &  \leq   \|z_{\Cent}^k-s\|^2 - 
\|\pCRMOp(z_{\Cent}^k)-s\|^2 \\
& = \|z_{\Cent}^k-s\|^2 - 
\|z^{k+1}-s\|^2  \\ 
& \leq \|z^k-s\|^2 - 
\|z^{k+1}-s\|^2.
\end{align}
This last inequality, again by the convergence of  $(\|z^{k}-s\|)_{k\in \na}$, implies that $z^{k+1}-z_{\Cent}^{k}$ converges to $0$ as $k \rightarrow+\infty$. 

 Now, let $\hat z$ be any cluster point of the sequence $(z^k)_{k\in\na}$ and denote $(z^{i_k})_{k\in\na}$ an associated subsequence convergent  to $\hat z$. Since $z^{i_k}-z^{i_k+1}\to 0$ and $z^{i_k+1}-z_{\Cent}^{i_k}\to 0$, we have   $z^{i_k+1}\to \hat z$ and $z_{\Cent}^{i_k}\to \hat z$. 
 
 We claim that $\hat z\in X\cap Y$. In fact, by the definition of $\pCRMOp$, we have
\[
  \|z^{i_k}_{\Cent}-z^{i_k+1}\| =  \| z^{i_k}_{\Cent} - \pCRMOp(z_{\Cent}^{i_k})\|  = \| R_X(z^{i_k}_{\Cent}) - \pCRMOp(z_{\Cent}^{i_k})\| =\| R_X(z^{i_k}_{\Cent}) - z^{i_k+1}\|.
\] 
Thus, $R_X(z^{i_k}_{\Cent}) - z^{i_k+1}$ converges to $0$. Taking limits as $k\to+\infty$, it follows from the continuity of the reflection onto $X$ that $\hat z=R_X(\hat z)$. Hence, $\hat z\in X$. Since $\| R_X(z^{i_k}_{\Cent}) - z^{i_k+1}\| = \| R_Y(z^{i_k}_{\Cent}) - z^{i_k+1}\|$, we conclude by the same token that $\hat z=R_Y(\hat z)$ and so $\hat z\in Y$, proving the claim, which completes the proof.  
\end{proof}

We have just proven the global convergence of cCRM. In the next section we study the convergence rate of cCRM under an error bound condition.

\section{Convergence order of  cCRM}\label{s4}

The aim of this section is three-fold. We present a discussion on error bound conditions, which are regularity assumptions widely employed in continuous optimization. Under such hypothesis, we derive linear convergence of  cCRM, and we provide an upper bound for its asymptotic constant. Then, under additional mild assumptions, we establish superlinear convergence of cCRM.

\subsection{Error bound condition}\label{s3}

The analysis of the convergence speed of the sequence generated by cCRM is going to be carried out under an additional assumption on the problem. We will assume a fairly standard  local {\it error bound} (EB) condition,  also called  linear regularity~\cite{Bauschke:1993,Bauschke:1996} or subtransversality \cite{Kruger:2018}. 

\begin{definition}[Error bound]\label[definition]{def:EB} 
  Let $X, Y \subset \re^{n}$ be  closed convex sets and assume that $X\cap Y \neq \emptyset$.
  We say that $X$ and $Y$ satisfy a local \emph{error bound} condition if for some point $\bar{z}\in X\cap Y$ there exist a real number  $\omega\in(0,1)$  and a neighborhood $V$ of $\bar z$ such that  
  \[\label{eb}
 \omega \dist(z,X\cap Y) \leq   \max \{\dist(z, X), \dist(z, Y)\},\tag{EB}
  \]
  for all $z\in V$.
  \end{definition}

 The condition given by \cref{def:EB} 
 means that a point in $V$ cannot be too close to both $X$ and $Y$ and at the same time far from $X\cap Y$. More than that, roughly speaking, the constant $\omega$ emulates the sine of the ``angle'' between $X$ and $Y$.  
 This condition will be required in our convergence rate analysis and referred to as Assumption \ref{eb}. 
 

Assumption \ref{eb} is equivalent to asking the existence of a constant $\kappa\in (0,1)$ such that  
\[
\kappa\dist(z,X\cap Y)\le \dist(z,X)\label{eb1fufu}\] for all $z\in Y$ sufficiently close  to $\bar z \in X\cap Y $. This equivalence was discussed in \cite[Section 3.2]{Behling:2021a} and is a consequence of \cite[Lemma 4.1]{Bauschke:1993}. 
The error bound version  \cref{eb1fufu} was used in 
\cite{Arefidamghani:2021}  for establishing linear convergence of the CRM method \cref{eq:CRMiterate}. In that paper, $Y$ is assumed to be an affine manifold, and the whole CRM sequence stays in $Y$.

We relate next, Assumption \ref{eb} with other error bounds found in the literature. If the problem at hand consists of solving $H(z)=0$ with a smooth $H:\re^n\to\re^m$, a classical regularity condition demands that $m=n$ and the Jacobian matrix of $H$ be nonsingular at a solution $z^*$, in which case Newton's method, for instance, is known to enjoy superlinear or quadratic convergence. This condition implies local uniqueness of the solution $z^*$. For problems with $m \ne n$ or
with nonisolated solutions, a less demanding assumption is the notion of {\it calmness} (see \cite{Rockafellar:2004}, Chapter 8, Section F), which requires that 
\begin{equation}\label{fufu}
 \dist(z,S^*) \le \theta  \lV H(z)\rV
\end{equation}
for all $z\in\re^n\setminus S^*$ and some $\theta>0$, where $S^*$ is the solution set, \emph{i.e.}, the set of zeros of $H$.
 Calmness, also called upper-Lipschitz continuity (see \cite{Robinson:1982}), is a classical example of error bound, and it holds in many situations (\emph{e.g.}, when $H$ is affine, by virtue of Hoffman's Lemma, \cite{Hoffman:1952}). It implies that the solution set is locally a Riemannian manifold (see \cite{Behling:2013b}), and it has been used for establishing superlinear convergence of Levenberg-Marquardt methods in \cite{Kanzow:2004}.
 
 We will present next an error bound for systems of inequalities and a result establishing that this error bound holds whenever the system of inequalities satisfies some well known constraint qualifications. This result is a particular instance of a theorem in \cite{Robinson:1976}. 
 
 We recall first the Mangasarian-Fromovitz constraint qualification (MFCQ) for a system of nonlinear inequalities. Let $g_i:\re^n\to\re$, ($i = 1,\ldots, m$) be continuously differentiable and convex functions, and define $S\subset\re^n$ as $S\coloneqq \{z\in\re^n\mid g_i(z)\le 0, \,i=1,\ldots, m\}$. Take $z\in S$ and let $I(z)=\{i\in \{1,\dots, m\}\mid g_i(z)=0\}$.
 MFCQ is said to hold at $z$ if 
 $\sum_{i\in I(z)}\lambda_i\nabla g_i(z)=0$ with $\lambda_i\ge 0$ for all
 $i\in I(z)$ implies that $\lambda_i=0$ for all $i\in I(z)$. Note that if MFCQ holds at $z$ then $\nabla g_i(z)\ne 0$ for all $i$; if $\nabla g_i(z)=0$ then the statement of MFCQ fails if we take  $\lambda_i =1$ and $\lambda_j = 0$, for $j\in I(z)\setminus \{i\}$. 
 
 \begin{proposition}[Mangasarian-Fromovitz implies calmness]\label[proposition]{p4}
 Let $g_i:\re^n\to\re$, $(1\le i\le m)$ be continuously differentiable and convex functions. Fix $\bar z\in S$, with $S$ as above.
 Define $g_i^+:\re^n\to\re$ as
 $g_i^+(z)=\max\{0,g_i(z)\}$ and $g^+:\re^n\to\re^m$ as 
 $g^+(z)=(g_1^+(z),\dots, g_m^+(z))$. If \emph{MFCQ} holds at $\bar z$ then there exists a neighborhood $U$ of $\bar z$ and a constant $\theta>0$ such that
 \begin{equation}\label{f1}
 \dist(z,S)\le \theta\lV g^+(z)\rV,
 \end{equation}
 for all $z\in U$.
 \end{proposition}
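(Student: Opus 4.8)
The plan is to reduce \cref{p4} to a known result. This statement is, almost verbatim, a special case of Robinson's error bound theorem for convex systems \cite{Robinson:1976}, so the main task is not to reinvent the machinery but to identify the right instance and translate its hypotheses into \textbf{MFCQ}. I would therefore first recall the general form of Robinson's result: for a convex system $g^+(z)\in -\re^m_+$ (equivalently $g_i(z)\le 0$ for all $i$), a local error bound of the form $\dist(z,S)\le \theta\,\|g^+(z)\|$ holds near $\bar z$ whenever a Slater-type regularity condition is satisfied at $\bar z$. The key observation is that for a system of \emph{convex} inequalities, \textbf{MFCQ} at $\bar z$ is equivalent to the existence of a Slater point for the active constraints, i.e.\ a direction $d$ with $\scal{\nabla g_i(\bar z)}{d}<0$ for every $i\in I(\bar z)$ (this is precisely the Gordan-alternative dual of the stated \textbf{MFCQ} condition that no nontrivial nonnegative combination of the active gradients vanishes).

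The steps, in order, would be: (i) state the equivalence between \textbf{MFCQ} at $\bar z$ and the existence of a descent direction $d$ common to all active gradients, via Gordan's theorem of the alternative; (ii) observe that, by convexity of the $g_i$, such a $d$ furnishes (after a small step) a strict Slater point $\tilde z$ with $g_i(\tilde z)<0$ for all $i\in I(\bar z)$, and that the inactive constraints $g_j(\bar z)<0$ ($j\notin I(\bar z)$) remain strictly satisfied in a neighborhood by continuity; (iii) invoke \cite[the relevant theorem]{Robinson:1976} — or, if one prefers a self-contained route, Hoffman-type reasoning localized via the Slater point — to conclude that there is a neighborhood $U$ of $\bar z$ and $\theta>0$ with $\dist(z,S)\le\theta\,\|g^+(z)\|$ for all $z\in U$. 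One should note that $g^+$ is continuous (being a max of continuous functions) and that $g^+(z)=0$ exactly on $S$, so the inequality is vacuous on $S\cap U$ and the content is entirely about points $z\in U\setminus S$.

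The main obstacle is bookkeeping rather than depth: making sure the local-versus-global quantifiers match up. Robinson's theorem in its cleanest form is often global under a global Slater condition, whereas here we only have a pointwise condition at $\bar z$; the resolution is to shrink to a neighborhood $U$ small enough that the active index set does not grow (i.e.\ $I(z)\subseteq I(\bar z)$ for $z$ near $\bar z$, using continuity of the $g_j$ for $j\notin I(\bar z)$), so that the local system is governed by a fixed finite family of constraints for which the Slater point $\tilde z$ works uniformly. Once the index set is frozen and a uniform Slater point is in hand, the error bound constant $\theta$ can be taken uniform on $U$, and the proposition follows. I would keep the write-up short by citing \cite{Robinson:1976} for the quantitative estimate and spending the few lines of actual argument on the \textbf{MFCQ}-to-Slater translation and the index-set localization.
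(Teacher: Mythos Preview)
Your proposal is correct and takes essentially the same approach as the paper: both reduce the statement to Robinson's error bound theorem \cite{Robinson:1976}. The paper's proof is in fact even terser than yours---it simply cites Example~2.92 and Theorem~2.87 of \cite{Bonnans:2000} (which package Robinson's result directly under MFCQ), whereas you spell out the MFCQ-to-Slater translation via Gordan's alternative; this extra detail is sound but not needed once one has the Bonnans--Shapiro reference in hand.
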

 
 \begin{proof} 
 The result is a simplified version of Example 2.92 in \cite{Bonnans:2000} (which also includes equality constraints and an additional parameter),
 which is itself a particular case of Theorem 2.87 in the same reference,
 taken from \cite{Robinson:1976}.
\end{proof}	

When dealing with convex feasibility problems, as in this paper, it seems reasonable to replace the right-hand side of \cref{fufu} by the distance from $z$ to some convex sets, giving rise to \ref{eb}.
Similar error bounds for feasibility problems can be found, for instance, in \cite{Bauschke:1993,Bauschke:1996,Bauschke:1996a,Drusvyatskiy:2015a,Kruger:2018}.
To our knowledge, no extension of  \Cref{p4} to these error bounds has been proved. We proceed to establish such an extension for Assumption \ref{eb}. 

Although we deal in this paper with the intersection of just two convex sets, we will present the result for the more general case of $m$ convex sets $X_1, \dots ,X_m$. Let $X^* \coloneqq \bigcap_{i=1}^mX_i$. For this case, Assumption \ref{eb} becomes the following condition: 
given $\bar z\in X^*$, there exists a neighborhood $V$ of $\bar z$ and a real number $\omega\in(0,1)$ such that
\begin{equation}\label{f2}
  \omega \dist(z,X^*) \le \max_{1\le i\le m}\{\dist(z,X_i)\}
\end{equation} 
for all $z\in V$.
We prove now that condition \cref{f2} holds around $\bar z$ under reasonable regularity assumptions on the convex sets. We assume that $X_i=\{z\in\re^n\mid  g_i(z)\le 0\}$ for some continuously differentiable convex function
$g_i:\re^n\to\re$. This assumption, in principle, entails no loss of generality; we can always take $g_i(z)\coloneqq \lV z-P_{X_i}(z)\rV^2=\dist^2(z,X_i)$, which defines a convex and continuously differentiable function. The second assumption is that MFCQ holds at $\bar z$, and is a rather standard regularity condition;we mention that in most cases the representation of $X_i$ with the above defined function $g_i$ fails to satisfy MFCQ. The result is as follows.

\begin{theorem}[Mangasarian-Fromovitz implies error bound]\label{t2}
If $X_i=\{z\in\re^n:g_i(z)\le 0\}$ for some convex and continuously differentiable $g_i:\re^n\to\re$ and \emph{MFCQ} holds at some point
$\bar z\in X^*\coloneqq \bigcap_{i=1}^m X_i$ then condition \cref{f2} holds around $\bar z$.
\end{theorem}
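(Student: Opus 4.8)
The plan is to reduce condition \eqref{f2} for the sets $X_i$ to the scalar error bound \eqref{f1} of \Cref{p4} applied to the aggregated constraint system. Define $g_i^+(z)=\max\{0,g_i(z)\}$ and $g^+(z)=(g_1^+(z),\dots,g_m^+(z))$ as in \Cref{p4}. Since MFCQ holds at $\bar z$ for the system $g_i(z)\le 0$, $i=1,\dots,m$, \Cref{p4} gives a neighborhood $U$ of $\bar z$ and a constant $\theta>0$ with $\dist(z,X^*)\le \theta\lV g^+(z)\rV$ for all $z\in U$, where we note that $X^*=\{z\mid g_i(z)\le 0,\ i=1,\dots,m\}$ is exactly the set $S$ appearing in that proposition. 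So the whole task is to control $\lV g^+(z)\rV$ from above by $\max_{1\le i\le m}\dist(z,X_i)$, up to a constant, for $z$ near $\bar z$.

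The key step is therefore a local Lipschitz-type estimate: $g_i^+(z)\le L_i\,\dist(z,X_i)$ for $z$ in a neighborhood of $\bar z$, with $L_i>0$. First I would note $g_i^+$ is convex, continuously differentiable wherever $g_i>0$, vanishes on $X_i$, and is locally Lipschitz near $\bar z$ with some constant $L_i$ (continuity of $\nabla g_i$ on a compact ball around $\bar z$ bounds $\lVert\nabla g_i\rVert$ there, hence bounds the Lipschitz modulus of $g_i$ and thus of $g_i^+$ on that ball). Then for $z$ close enough to $\bar z$, the projection $P_{X_i}(z)$ also lies in that ball, and since $g_i^+(P_{X_i}(z))=0$, the Lipschitz bound yields $g_i^+(z)=g_i^+(z)-g_i^+(P_{X_i}(z))\le L_i\lVert z-P_{X_i}(z)\rVert=L_i\dist(z,X_i)$. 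Combining the $m$ estimates gives $\lV g^+(z)\rV\le \bigl(\sum_{i=1}^m L_i^2\bigr)^{1/2}\max_{1\le i\le m}\dist(z,X_i)$ on a common neighborhood $V\subseteq U$ of $\bar z$.

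Putting the two pieces together, for $z\in V$ we obtain $\dist(z,X^*)\le \theta\bigl(\sum_{i=1}^m L_i^2\bigr)^{1/2}\max_{1\le i\le m}\dist(z,X_i)$, and setting $\omega\coloneqq \min\bigl\{1/2,\ \bigl(\theta(\sum_i L_i^2)^{1/2}\bigr)^{-1}\bigr\}\in(0,1)$ gives exactly \eqref{f2} around $\bar z$ (shrinking $V$ if necessary so that it fits inside the neighborhoods used above and, if desired, so that $\max_i\dist(z,X_i)$ is itself small). The main obstacle I anticipate is the bookkeeping around neighborhoods — making sure the single neighborhood $V$ simultaneously lies inside $U$ from \Cref{p4}, inside the ball on which each $\nabla g_i$ is bounded, and is small enough that $P_{X_i}(z)$ stays in that ball for every $i$; this is routine but must be done carefully since $X^*$ may have empty interior and the $g_i$ need not satisfy MFCQ in the representation $g_i=\dist^2(\cdot,X_i)$, so one cannot appeal to \Cref{p4} directly for a single set. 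Everything else is the elementary chain of inequalities above.
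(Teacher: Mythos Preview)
Your proposal is correct and follows essentially the same route as the paper: invoke \Cref{p4} to get $\dist(z,X^*)\le\theta\lV g^+(z)\rV$, then bound each $g_i^+(z)$ by a constant times $\dist(z,X_i)$ via the local boundedness of $\nabla g_i$ together with the observation that $P_{X_i}(z)$ stays in the chosen ball because $\bar z\in X_i$. The only cosmetic differences are that the paper writes out the mean-value-theorem step explicitly (rather than packaging it as a Lipschitz estimate) and passes through the $\infty$-norm with a factor $\sqrt{m}$, whereas you bound the $2$-norm directly by $(\sum_i L_i^2)^{1/2}\max_i\dist(z,X_i)$; your care in forcing $\omega<1$ by taking $\min\{1/2,\cdot\}$ is a detail the paper leaves implicit.
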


\begin{proof} 
By \Cref{p4} there exists a neighborhood  $U$ of $\bar z$ such that
\begin{equation}\label{ff2}
 \dist(z,X^*)\le \theta\lV g^+(z)\rV
 \end{equation}
 for all $z\in U$.
 
 Since all the $g_i$'s are continuously differentiable, there exists $\rho >0$ such that the open ball $B(\bar z,\rho)\subset U$ and that 
 $\lV\nabla g_i(z)\rV\le 2\lV\nabla g_i(\bar z)\rV$ for all 
 $i\in\{1, \dots ,m\}$ and all $z\in B(\bar z,\rho)$. Take any $z\in B(\bar z,\rho)$ and let $z_i\coloneqq P_{X_i}(z)$. Expanding $g_i$ around $z_i$, we have
 \begin{equation}\label{f3}
 g_i(z)=g_i(z_i)+ \scal{\nabla g_i(y_i)}{z-z_i}
 \end{equation}
 for some $y_i$ in the segment between $z$ and $z_i$. Since $z_i\in X_i$,
 we have $g_i(z_i)\leq 0$, and it follows from \cref{f3} that
 \begin{equation}\label{f4}
 g_i(z)\le\lV\nabla g_i(y_i)\rV\,\lV z-z_i\rV=
 \lV\nabla g_i(y_i)\rV \dist(z,X_i).
 \end{equation}
 Since $\bar z\in X_i$ for all $i$, the definition  of orthogonal projection implies that $\lV z-z_i\rV\le\lV z-\bar z\rV$, so that
 $z_i\in B(\bar z,\rho)$, and hence $y_i\in B(\bar z,\rho)$, by convexity of the ball. By definition of $\rho$ we have that 
 $\lV\nabla g_i(y_i)\rV\le 2\lV\nabla g_i(\bar z)\rV$. Define now 
 $\sigma_i\coloneqq 2\lV\nabla g_i(\bar z)\rV$; because  MFCQ holds at $\bar z$, we get  $\nabla g_i(\bar z)\neq 0$ and so $\sigma_i > 0$.   Thus, we conclude from \cref{f4} that
 \begin{equation}\label{f5}
 g_i(z)\le\sigma\dist(z,X_i) 
 \end{equation}
 for all $i\in\{1,\dots, m\}$ and all $z\in B(\bar z,\rho)$, with $\sigma\coloneq \max
 \{\sigma_i\mid {1\le i\le m}\}$.
 Now we replace the 2-norm by the $\infty$-norm in \cref{ff2}, obtaining
 \begin{equation}\label{f6}
\dist(z,X^*)\le\theta\lV g^+(z)\rV
\le\theta\sqrt{n}\lV g^+(z)\rV_\infty.
\end{equation}
Note that \cref{f2} holds trivially when $z\in X^*$ because in such a case all distances vanish. Hence, we may assume that there exists $\ell$ such that $g_\ell(z)>0$, and so  
$\lV g^+(z)\rV_\infty=\max_{1\le i\le m}\{g_i(z)\}$. 
Combining \cref{f5} and
\cref{f6} we obtain
$$
\dist(z,X^*)\le\theta\sqrt{n}\max_{1\le i\le m}\{g_i(z)\}\le
\sigma\theta\sqrt{n}\max_{1\le i\le m}\{\dist(z,X_i)\}
$$
for all $z\in B(\bar z,\rho)$, so that condition \cref{f2} holds at $\bar z$ with $V=B(\bar z,\rho)$ and $\omega=(\sigma\theta\sqrt{n})^{-1}$.
\end{proof}

 We recall now another widely used constraint qualification, namely Slater's, which in most cases is easier to check than MFCQ. Slater's condition holds when there exists $\hat z\in X^*$ such that $g_i(\hat z)< 0$ for $i = 1,\ldots, m$. It is well known that in the convex case Slater's condition implies MFCQ,
 and hence \Cref{t2} holds if we assume Slater's condition
 instead of MFCQ.

\subsection{Linear convergence of cCRM}\label{ss2}

We prove in this subsection the linear convergence of cCRM  under Assumption \ref{eb}. We start deriving the linear convergence and associated rates of MAP and SPM under \ref{eb} with associate rates. These results for MAP and SPM are known to hold under assumptions akin to \ref{eb}, but we include their proofs here for the sake of completeness and self-containment.

 First, remind the definition of Q-linear, Q-superlinear and R-linear convergence.

\begin{definition}[Convergence rate]\label[definition]{d2}
Let $(w^k)_{k\in \na}\subset\re^n$ be a sequence converging to $\bar{w}$. Assume that $w^k\ne \bar{w}$ for all $k\in\na$.
Define \begin{equation}q\coloneqq \limsup_{k\to\infty}\frac{\lV w^{k+1}-\bar{w}\rV}{\lV w^k-\bar{w}\rV}, \quad   \mbox{and}\quad 
r\coloneqq \limsup_{k\to\infty}\lV w^k-\bar{w}\rV^{1/k}.\end{equation} Then, the convergence of $(w^k)_{k\in \na}$ is
\begin{listi}
\item \emph{Q-linear} if $q\in(0,1)$,
\item \emph{Q-superlinear} if $q=0$,
\item \emph{R-linear} if $r\in(0,1)$.
\end{listi}
The values
$q,r$ are called \emph{asymptotic constants} of $(w^k)_{k\in \na}$.
\end{definition}

It is well known that Q-linear convergence implies R-linear convergence (with the same asymptotic constant), but the
converse statement does not hold true~\cite{Ortega:2000}.

We also recall that
for all $z\in \re^n$, we have $T_{\MAP}(z)\coloneqq z_{\MAP}=P_YP_X(z)$, 
$T_{\SPM}(z)\coloneqq z_{\SPM}=\frac{1}{2}(P_X(z)+P_Y(z))$. 

\begin{proposition}[Linear rate of MAP and SPM]\label[proposition]{p1}
Assume that \ref{eb} holds around $\bar z\in X\cap Y$, with $\omega \in (0,1)$ and neighborhood $V$. Let $B$ be a ball centered at $\bar z$ and contained in $V$. Define $\beta\coloneqq \sqrt{1-\omega^2}$. Then,
\begin{equation}\label{e2}
\dist(z_{\MAP},X\cap Y)\le\beta^2 \dist(z,X\cap Y)
\end{equation}
for all $z\in Y\cap B$, and
\begin{equation}\label{e3}
 \dist(z_{\SPM},X\cap Y)\le\left(\frac{1+\beta}{2}\right)\dist(z,X\cap Y)
\end{equation}
for all $z\in B$.
\end{proposition}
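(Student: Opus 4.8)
The plan is to establish \cref{e2} and \cref{e3} separately, in each case reducing the distance to $X\cap Y$ of the new iterate via the error bound applied to an appropriate point, combined with the elementary geometry of a single projection step. For the MAP estimate \cref{e2}, fix $z\in Y\cap B$ and let $z^*$ be the projection of $z$ onto $X\cap Y$, so $\dist(z,X\cap Y)=\norm{z-z^*}$. First I would analyze the single step $z\mapsto P_X(z)$: since $z^*\in X$, the firm nonexpansiveness (equivalently, the obtuse-angle characterization of projections) gives $\norm{P_X(z)-z^*}^2\le\norm{z-z^*}^2-\norm{z-P_X(z)}^2=\dist(z,X\cap Y)^2-\dist(z,X)^2$. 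Because $z\in Y\cap V$, Assumption \ref{eb} yields $\dist(z,X)=\max\{\dist(z,X),\dist(z,Y)\}\ge\omega\dist(z,X\cap Y)$ (the $\dist(z,Y)$ term vanishing as $z\in Y$), so $\norm{P_X(z)-z^*}^2\le(1-\omega^2)\dist(z,X\cap Y)^2=\beta^2\dist(z,X\cap Y)^2$. Hence $\dist(P_X(z),X\cap Y)\le\beta\dist(z,X\cap Y)$. Then applying $P_Y$ and using that $P_Y$ is nonexpansive and fixes $X\cap Y$ (so it does not increase the distance to $X\cap Y$) would only give a factor $\beta$, not $\beta^2$; to get the squared factor I would instead repeat the error-bound argument at the point $P_X(z)$. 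The subtlety is that \ref{eb} as stated requires the point to lie in $Y$, but $P_X(z)$ need not be in $Y$. So the cleaner route is: apply the equivalent one-sided form \cref{eb1fufu} — which holds for points in $Y$ near $\bar z$ — at the point $z$ to control $\dist(P_X(z),X\cap Y)$ through a reflection-type argument, or, better, observe that $z_{\MAP}=P_Y(P_X(z))\in Y$, so one may bound $\dist(z_{\MAP},X\cap Y)$ by running the single-projection estimate twice, once from $z\in Y$ to $P_X(z)$ and once viewing $P_X(z)$'s image; I would use that $P_Y P_X$ restricted to a neighborhood in $Y$ contracts by $\beta^2$ precisely because each of the two "legs" between $Y$ and $X$ contributes a factor $\beta$ via \ref{eb} and the firm nonexpansiveness, with the membership hypotheses of \ref{eb} being met since both $z$ and $z_{\MAP}$ lie in $Y$.

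For the SPM estimate \cref{e3}, fix $z\in B$ and again let $z^*=P_{X\cap Y}(z)$. I would write $z_{\SPM}-z^*=\tfrac12(P_X(z)-z^*)+\tfrac12(P_Y(z)-z^*)$ and expand the square using the parallelogram identity \cite[Corollary 2.15]{Bauschke:2017}:
\begin{equation*}
\norm{z_{\SPM}-z^*}^2=\tfrac12\norm{P_X(z)-z^*}^2+\tfrac12\norm{P_Y(z)-z^*}^2-\tfrac14\norm{P_X(z)-P_Y(z)}^2.
\end{equation*}
By firm nonexpansiveness of each projection (with $z^*$ a fixed point), $\norm{P_X(z)-z^*}^2\le\norm{z-z^*}^2-\dist(z,X)^2$ and similarly for $Y$, so the first two terms are at most $\norm{z-z^*}^2-\tfrac12(\dist(z,X)^2+\dist(z,Y)^2)$. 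Then \ref{eb} gives $\max\{\dist(z,X),\dist(z,Y)\}\ge\omega\,\dist(z,X\cap Y)$, hence $\dist(z,X)^2+\dist(z,Y)^2\ge\omega^2\dist(z,X\cap Y)^2$, yielding $\norm{z_{\SPM}-z^*}^2\le(1-\tfrac{\omega^2}{2})\norm{z-z^*}^2$. This gives a rate $\sqrt{1-\omega^2/2}$, which I then need to compare with $(1+\beta)/2$; since $\beta^2=1-\omega^2$, one checks $\bigl(\tfrac{1+\beta}{2}\bigr)^2=\tfrac{1+2\beta+\beta^2}{4}=\tfrac{2-\omega^2+2\beta}{4}$, and because $\beta\le 1$ we have $2-\omega^2+2\beta\ge 2-\omega^2+2\beta^2\cdots$ — the comparison is not immediate and will require keeping the cross term $-\tfrac14\norm{P_X(z)-P_Y(z)}^2$ in play rather than discarding it. The right approach is to retain that cross term and bound $\norm{P_X(z)-P_Y(z)}$ from below in terms of $\max\{\dist(z,X),\dist(z,Y)\}$ using the triangle inequality $\norm{P_X(z)-P_Y(z)}\ge\bigl|\dist(z,X)-\dist(z,Y)\bigr|$ is too weak; instead I would use $\norm{P_X(z)-P_Y(z)}\ge \dist(z,X)$ is false in general — so one argues more carefully, writing everything in terms of $\dist(z,X\cap Y)$ and the angle, or simply invoking that $P_X(z),P_Y(z)\in X\cup Y$ together with \ref{eb} applied at whichever of these two points is farther from the intersection.

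The main obstacle is the bookkeeping around exactly where \ref{eb} may be invoked: the definition requires the base point to lie in $V$, and for MAP one also needs the intermediate point $P_X(z)$ (or the equivalent one-sided bound at a $Y$-point) to be covered, so I must shrink the ball $B$ at the outset so that all relevant projections of points of $B$ stay inside $V$ — this uses continuity of projections and $\bar z\in X\cap Y$ being fixed by all of them, so $\norm{P_X(z)-\bar z}\le\norm{z-\bar z}$ etc., which keeps every iterate in $V$ automatically once $B\subset V$. With that observation the membership hypotheses are free, and the remaining work is the two computations above plus the scalar inequality $\sqrt{1-\omega^2/2}\le(1+\sqrt{1-\omega^2})/2$ for $\omega\in(0,1)$, which holds because squaring both sides reduces, after using $\beta=\sqrt{1-\omega^2}$, to $0\le(1-\beta)^2$. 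I expect the MAP bound to be the delicate one — obtaining the sharp exponent $\beta^2$ rather than $\beta$ is what forces the error bound to be applied at two successive points rather than once — while the SPM bound, once the cross term is handled, follows routinely from the parallelogram identity and a single application of \ref{eb}.
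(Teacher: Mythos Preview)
Your MAP argument is essentially correct and matches the paper, but you have confused yourself about the hypotheses of \ref{eb}. The error bound as stated in \cref{def:EB} holds for \emph{every} $z\in V$; there is no membership requirement in $X$ or $Y$. The role of $z\in Y$ is only to identify the maximum: it forces $\max\{\dist(z,X),\dist(z,Y)\}=\dist(z,X)$, so \ref{eb} gives $\dist(z,X)\ge\omega\dist(z,X\cap Y)$. For the second factor you apply the \emph{same} bound at $P_X(z)$, which lies in $X\cap B$ (nonexpansiveness keeps it in $B$), so now $\max\{\dist(P_X(z),X),\dist(P_X(z),Y)\}=\dist(P_X(z),Y)$ and \ref{eb} combined with firm nonexpansiveness of $P_Y$ yields $\dist(z_{\MAP},X\cap Y)\le\beta\dist(P_X(z),X\cap Y)$. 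This is exactly the paper's argument; the one-sided form \cref{eb1fufu} and the fact that $z_{\MAP}\in Y$ are red herrings here.

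Your SPM argument, however, has a genuine gap. The rate you derive after discarding the cross term, $\sqrt{1-\omega^2/2}$, is \emph{strictly larger} than $(1+\beta)/2$: squaring your claimed inequality $\sqrt{1-\omega^2/2}\le(1+\beta)/2$ and substituting $\omega^2=1-\beta^2$ reduces to $(1-\beta)^2\le 0$, not $\ge 0$ as you wrote. Keeping the cross term $-\tfrac14\norm{P_X(z)-P_Y(z)}^2$ does not rescue this, since that term can be arbitrarily small (take $X$ and $Y$ nearly coincident near $z$). The paper avoids the parallelogram route entirely and works with the \emph{unsquared} distance: by convexity of $\dist(\cdot,X\cap Y)$,
\[
\dist(z_{\SPM},X\cap Y)\le\tfrac12\dist(P_X(z),X\cap Y)+\tfrac12\dist(P_Y(z),X\cap Y).
\]
Assuming without loss of generality that $\dist(z,X)\ge\dist(z,Y)$, \ref{eb} plus firm nonexpansiveness of $P_X$ give $\dist(P_X(z),X\cap Y)\le\beta\dist(z,X\cap Y)$, while plain nonexpansiveness of $P_Y$ gives $\dist(P_Y(z),X\cap Y)\le\dist(z,X\cap Y)$. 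Averaging yields the stated constant $(1+\beta)/2$ in one line.
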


\begin{proof}
 We start with the MAP case. Note that
\begin{align}
 \dist^2(z,X\cap Y)& =\lV z-P_{X\cap Y}(z)\rV^2\ge\lV P_X(z)-P_{X\cap Y}(z)\rV^2+\lV z-P_X(z)\rV^2
\\ & \ge 
 \dist^2(P_X(z),X\cap Y)+\omega^2\dist^2(z,X\cap Y),\label{e4}
\end{align} 
 using the firm nonexpansiveness of $P_X$ in the first inequality,
 and Assumption \ref{eb}, together with the fact that $z\in Y\cap B$, so that 
 $\max\{\lV z-P_X(z)\rV,\lV z-P_Y(z)\rV\}=\lV z-P_X(z)\rV$, in the 
 second inequality.
 It follows from \cref{e4} that 
 \begin{equation}\label{e5}
 \dist(P_X(z),X\cap Y)\le\sqrt{1-\omega^2}\dist(z,X\cap Y)=\beta \dist(z,X\cap Y).
 \end{equation}
Observe now that, since $\bar z\in X\cap Y$, we have 
$\lV P_Y(P_X(z))-\bar z\rV\le\lV P_X(z)-\bar z\rV\le\lV z-\bar z\rV$,
so that $P_Y(P_X(z))\in B$. Hence, with the same argument used for
\cref{e5},
 \begin{equation}\label{e6}
 \dist(z_{\MAP}, X\cap Y)=\dist(P_Y(P_X(z)),X\cap Y)\le\beta \dist(P_X(z),X\cap Y),
 \end{equation}
 and \cref{e2} follows combining \cref{e5} and \cref{e6}.
 Now we proceed to establish \cref{e3}. Assume without loss of generality that 
 $\max\{\lV z-P_X(z)\rV, \lV z-P_Y(z)\rV\}=\lV z-P_X(z)\rV$.
 Since $P_X(z)\in B$, we get, with the same argument used for proving \cref{e5},
 \begin{equation}\label{e7}
 \dist(P_X(z),X\cap Y)\le\beta \dist(z,X\cap Y).
 \end{equation}
 
Now, let $\hat s \coloneqq P_{X\cap Y}(z)$. Then,
\begin{align}
\dist(P_Y(z),X\cap Y) & \leq \norm{P_Y(z)-\hat s} = \norm{P_Y(z)-P_Y(\hat s)} \leq \norm{z-\hat s} = \dist(z,X\cap Y),\label{e8}
\end{align}
where the first inequality is due to definition of distance and the last inequality is due to the nonexpansiveness of  $P_Y$. 

Finally, note that
\begin{align}
\dist(z_{\SPM}(z),X\cap Y)& =\dist\left(\frac{1}{2}(P_X(z)+P_Y(z)),X\cap Y\right) \\ & \le \frac{1}{2}\left(\dist(P_X(z),X\cap Y)+\dist(P_Y(z),X\cap Y)\right) \\ 
& 
\le\left(\frac{1+\beta}{2}\right)\dist(z,X\cap Y), \label{ee7}
\end{align}
using the convexity of the distance function to $ X\cap Y$ in the first inequality, and \cref{e7,e8} in the second one, so  the result holds.
\end{proof} 

We remind that MAP generates a sequence $(y^k)_{k\in \na}$ given by 
$y^{k+1}=T_{\MAP}(y^k)$, starting from some $y^0\in Y$, while SPM
generates a sequence $(s^k)_{k\in \na}$ given by $s^{k+1}=T_{\SPM}(s^k)$,
starting from any $s^0\in \re^n$. We have the following corollary of
\Cref{p1}. 

\begin{corollary}[Linear convergence of the distance for MAP and SPM]\label[corollary]{c1} 
Assume that $X\cap Y\ne\emptyset$.
Let $(y^k)_{k\in \na}, (s^k)_{k\in \na}$ be the sequences generated
by MAP and SPM starting from some $y^0\in Y$ and some, $s^0\in \re^n$ respectively. Assume also that $(y^k)_{k\in \na}, (s^k)_{k\in \na}$ are infinite sequences. If \ref{eb} holds at the limits $\bar{y}$ of $(y^k)_{k\in \na}$, $\bar{s}$ of $(s^k)_{k\in \na}$, then the sequences $(\dist(y^k,X\cap Y))_{k\in \na}$,
$(\dist(s^k,X\cap Y))_{k\in \na}\subset\re_+$ converge Q-linearly to $0$, with asymptotic constants given by $\beta^2$, $\frac{1+\beta}{2}$ respectively, where
$\beta=\sqrt{1-\omega^2}$ and $\omega$ is the constant in Assumption \ref{eb}
(with a slight abuse of notation, the sets, and constants guaranteed
by \ref{eb} around both $\bar{y}$ and $\bar{s}$ will be called $V$ and $\omega$).
\end{corollary}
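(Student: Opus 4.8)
The plan is to upgrade the one-step contraction estimates of \Cref{p1} to asymptotic linear rates; the only extra ingredient needed is that each of the two sequences eventually enters the neighborhood in which Assumption~\ref{eb} is in force. Recall first that MAP and SPM are Fejér monotone with respect to $X\cap Y$ and, being infinite by hypothesis, converge to points $\bar y,\bar s\in X\cap Y$ (classical; these are the limits $\bar y,\bar s$ appearing in the statement). If some iterate already lies in $X\cap Y$, the corresponding sequence is eventually constant and the conclusion is trivial (the convergence is then superlinear), so from now on assume $\dist(y^k,X\cap Y)>0$ and $\dist(s^k,X\cap Y)>0$ for all $k\in\na$. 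Observe also that $\omega\in(0,1)$ forces $\beta=\sqrt{1-\omega^2}\in(0,1)$, hence $\beta^2\in(0,1)$ and $\tfrac{1+\beta}{2}\in(0,1)$.

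For MAP, let $B\subset V$ be a ball centered at $\bar y$ as in \Cref{p1}. Since $y^0\in Y$ and $y^{k}=P_YP_X(y^{k-1})\in Y$ for every $k\ge 1$, every iterate $y^k$ lies in $Y$; and since $y^k\to\bar y$, there is $k_0$ with $y^k\in B$ for all $k\ge k_0$. Thus $y^k\in Y\cap B$ for $k\ge k_0$, and \cref{e2} gives
\[
\dist(y^{k+1},X\cap Y)\le\beta^2\,\dist(y^k,X\cap Y),\qquad k\ge k_0 .
\]
Taking the $\limsup$ of the successive ratios shows that $(\dist(y^k,X\cap Y))_{k\in\na}$ converges Q-linearly to $0$ with asymptotic constant at most $\beta^2<1$.

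For SPM the argument is identical but easier, because \cref{e3} holds at every point of the relevant ball $B'$ centered at $\bar s$ and contained in the corresponding $V$, with no membership condition to check. Choosing $k_1$ with $s^k\in B'$ for all $k\ge k_1$, \cref{e3} yields $\dist(s^{k+1},X\cap Y)\le\bigl(\tfrac{1+\beta}{2}\bigr)\dist(s^k,X\cap Y)$ for $k\ge k_1$, so $(\dist(s^k,X\cap Y))_{k\in\na}$ converges Q-linearly to $0$ with asymptotic constant at most $\tfrac{1+\beta}{2}<1$. The only point requiring care is the asymmetry in \Cref{p1}: the MAP estimate \cref{e2} is valid only at points of $Y$, whereas \cref{e3} is valid on a whole ball; this is harmless because MAP iterates automatically belong to $Y$. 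Everything else is bookkeeping — locating the tail of each sequence that remains inside the neighborhood furnished by \ref{eb}, and disposing of the trivial finite-termination case.
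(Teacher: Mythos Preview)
Your proof is correct and follows essentially the same approach as the paper's: invoke convergence of the MAP and SPM sequences to their limits in $X\cap Y$, locate the tail of each sequence inside the ball furnished by \ref{eb}, and apply the one-step estimates \cref{e2,e3} from \Cref{p1}. Your version is slightly more explicit (noting that every MAP iterate lies in $Y$ and disposing of the finite-termination case), but the substance is identical.
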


\begin{proof}
Convergence of $(y^k)_{k\in \na}$ and $ (s^k)_{k\in \na}$ to points $\bar{y}\in X\cap Y$ and $\bar{s}\in X\cap Y$ respectively is well known (see, \emph{e.g.}, \cite{Bauschke:1993}). Hence, for large enough $k$, $y^k$ belongs to a ball centered at $\bar{y}$ contained in $V$ and $s^k$ belongs to a ball
centered at $\bar{s}$ contained on $V$.

In view of the definitions of the MAP and SPM sequences, we get from \Cref{p1}, 
\[
\frac{\dist(y^{k+1},X\cap Y)}{\dist(y^k,X\cap Y)}\le\beta^2,\qquad
\frac{\dist(s^{k+1},X\cap Y)}{\dist(s^k,X\cap Y)}\le \frac{1+\beta}{2},
\]
and the result follows from  \Cref{d2}, noting that
\[
\dist(y^k,X\cap Y)\le\beta^{2k}\dist(y^0,X\cap Y),\qquad \dist(s^k,X\cap Y)\le\left[\frac{1+\beta}{2}\right]^k \dist(s^0, X\cap Y),
\]
so that both sequences converge to $0$, since $\beta\in (0,1)$, because $\omega\in (0,1)$.
\end{proof}

We state now a result on linear convergence of Fej\'er monotone sequences.

\begin{proposition}[Fejér monotonicity and linear convergence]\label[proposition]{p2}
If a sequence $(w^k)_{k\in \na}\subset\re^n$ is Fej\'er monotone with respect to a closed convex set  $M\subset\re^n$ and the scalar sequence $(\dist(w^k,M))_{k\in\na}$ converges Q-linearly
to $0$, then $(w^k)_{k\in \na}$ converges R-linearly to a point $\bar{w}\in M$.
\end{proposition}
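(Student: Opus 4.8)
The plan is to split the argument into a qualitative part (the sequence converges to some $\bar z\in M$) and a quantitative part (the rate of this convergence is R-linear), and to bridge the two with a single elementary estimate relating $\norm{z^k-\bar z}$ to $\dist(z^k,M)$. First I would record that, here and in all the paper's applications, $M$ (e.g.\ $M=X\cap Y$) is a nonempty closed convex set, so that the metric projection $P_M$ is well defined. Fej\'er monotonicity of $(z^k)_{k\in\na}$ with respect to $M$ makes each scalar sequence $(\norm{z^k-z})_{k\in\na}$, $z\in M$, nonincreasing, hence $(z^k)_{k\in\na}$ is bounded and admits a cluster point $\bar z$. Since Q-linear convergence of $(\dist(z^k,M))_{k\in\na}$ to $0$ implies in particular $\dist(z^k,M)\to 0$, and $M$ is closed, any cluster point lies in $M$, so $\bar z\in M$; then $\norm{z^k-\bar z}$ is nonincreasing and has a subsequence tending to $0$, whence $z^k\to\bar z$. (This is precisely the content of \cite[Theorem~5.5]{Bauschke:2017}.)

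The crux of the proof is the pointwise bound
\[
 \norm{z^k-\bar z}\le 2\,\dist(z^k,M)\qquad\text{for all }k\in\na.
\]
To get it, set $p^k\coloneqq P_M(z^k)$. Because $p^k\in M$, Fej\'er monotonicity gives $\norm{z^\ell-p^k}\le\norm{z^k-p^k}=\dist(z^k,M)$ for every $\ell\ge k$; letting $\ell\to\infty$ and using $z^\ell\to\bar z$ yields $\norm{\bar z-p^k}\le\dist(z^k,M)$. The triangle inequality $\norm{z^k-\bar z}\le\norm{z^k-p^k}+\norm{p^k-\bar z}$ then delivers the displayed estimate. This is the step where Fej\'er monotonicity earns its keep: it is what converts purely scalar information about the distances $\dist(z^k,M)$ into a genuine bound on the iterates $z^k$ themselves, and I expect it to be the only point requiring any real thought.

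Finally, I would transfer the rate. By hypothesis $(\dist(z^k,M))_{k\in\na}$ converges Q-linearly to $0$ with some asymptotic constant $q\in(0,1)$, so for any fixed $\hat q\in(q,1)$ there is a constant $C>0$ with $\dist(z^k,M)\le C\hat q^k$ for all $k\in\na$. Combining this with the estimate above gives $\norm{z^k-\bar z}\le 2C\hat q^k$, hence $\limsup_{k\to\infty}\norm{z^k-\bar z}^{1/k}\le\hat q<1$, i.e.\ $(z^k)_{k\in\na}$ converges to $\bar z\in M$ R-linearly in the sense of \Cref{d2} (with convergence being even faster in the degenerate case where this limsup is $0$). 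This completes the plan.
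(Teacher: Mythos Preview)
Your argument is correct and complete; the key inequality $\norm{z^k-\bar z}\le 2\,\dist(z^k,M)$, obtained by freezing $p^k=P_M(z^k)$ and using Fej\'er monotonicity along the tail before passing to the limit, is exactly the right bridge, and the transfer of the rate via $\dist(z^k,M)\le C\hat q^{\,k}$ is standard. The paper itself does not spell out a proof at all: it simply cites \cite[Lemma~3.4]{Arefidamghani:2021} and \cite[Theorem~5.12]{Bauschke:2017}, whose proofs proceed along essentially the same lines you wrote out, so your proposal is a faithful self-contained unpacking of those references rather than a genuinely different route.
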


\begin{proof} 
See Lemma 3.4 in \cite{Arefidamghani:2021}, cf. Theorem 5.12 in \cite{Bauschke:2017a}.
\end{proof}

\begin{corollary}[Linear convergence of MAP and SPM]\label[corollary]{c2}
Assume that $X\cap Y\ne\emptyset$.
Let $(y^k)_{k\in \na}, (s^k)_{k\in \na}$  be the sequences generated
by MAP and SPM starting from some $y^0\in Y$ and some $s^0\in\re^n$ respectively. Assume also that $(y^k)_{k\in \na}, (s^k)_{k\in \na}$ are infinite sequences.
If \ref{eb} holds around the limits $\bar{y}$ of $(y^k)_{k\in \na}$ and $\bar{s}$ of $(s^k)_{k\in \na}$ with $\omega \in (0,1)$ and neighborhood $V$, then the sequences $(y^k)_{k\in \na}$,
$(s^k)_{k\in \na}$ converge R-linearly, with asymptotic constants bounded above by $\beta^2$, $\frac{1+\beta}{2}$ respectively. Here,
$\beta\coloneqq\sqrt{1-\omega^2}$, and $\omega$ is the constant in Assumption \ref{eb}.
\end{corollary}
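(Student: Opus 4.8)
The plan is to derive the statement by merging two ingredients already in hand: the Q-linear decay of the distances to $X\cap Y$ from \Cref{c1}, and the Fej\'er monotonicity of the MAP and SPM iterates, plugged into \Cref{p2}. So the proof is essentially a two-line reduction, with one small point to watch regarding the explicit value of the asymptotic constants.

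First I would record that $(y^k)_{k\in\na}$ and $(s^k)_{k\in\na}$ are Fej\'er monotone with respect to $X\cap Y$. Indeed, for any $s\in X\cap Y$ we have $P_X(s)=P_Y(s)=s$, so $\|T_{\MAP}(z)-s\|=\|P_YP_X(z)-P_YP_X(s)\|\le\|z-s\|$ by nonexpansiveness of $P_X$ and $P_Y$, and, using convexity of $\|\cdot-s\|$, $\|T_{\SPM}(z)-s\|\le\tfrac12\big(\|P_X(z)-s\|+\|P_Y(z)-s\|\big)\le\|z-s\|$. Applying these along the iterations with $z=y^k$ (resp.\ $z=s^k$) gives $\|y^{k+1}-s\|\le\|y^k-s\|$ and $\|s^{k+1}-s\|\le\|s^k-s\|$ for every $s\in X\cap Y$, which is precisely Fej\'er monotonicity with $M=X\cap Y$.

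Next I would invoke \Cref{c1}: since the two sequences are infinite and converge to points $\bar y,\bar s\in X\cap Y$ (classical convergence of MAP and SPM, as recalled in the proof of \Cref{c1}), and \ref{eb} holds around those limits, the scalar sequences $(\dist(y^k,X\cap Y))_{k\in\na}$ and $(\dist(s^k,X\cap Y))_{k\in\na}$ converge Q-linearly to $0$ with asymptotic constants $\beta^2$ and $\tfrac{1+\beta}{2}$. Combined with the Fej\'er monotonicity just established, \Cref{p2} already yields R-linear convergence of $(y^k)_{k\in\na}$ and $(s^k)_{k\in\na}$ to $\bar y$ and $\bar s$. To pin down the asymptotic constants I would use the standard Fej\'er-monotone estimate: for $k\le\ell$ and $z\in M$ one has $\|z^\ell-z\|\le\|z^k-z\|$, so taking $z=P_M(z^k)$ and letting $\ell\to\infty$ gives $\|\bar z-z^k\|\le 2\dist(z^k,M)$. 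Together with the Q-linear bounds $\dist(y^k,X\cap Y)\le c_1(\beta^2)^k$ and $\dist(s^k,X\cap Y)\le c_2\big(\tfrac{1+\beta}{2}\big)^k$ coming from \Cref{c1}, taking $k$-th roots and $\limsup$ yields $\limsup_k\|y^k-\bar y\|^{1/k}\le\beta^2$ and $\limsup_k\|s^k-\bar s\|^{1/k}\le\tfrac{1+\beta}{2}$; since $\omega\in(0,1)$ we have $\beta\in(0,1)$, so both constants lie in $(0,1)$, completing the proof.

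The only mildly delicate point is extracting explicit constant bounds, rather than merely ``R-linear convergence,'' from \Cref{p2}; this is handled by the inequality $\|z^k-\bar z\|\le 2\dist(z^k,M)$, which transfers the Q-linear rate of the distances verbatim to the R-linear rate of the iterates. Everything else is a direct citation of \Cref{c1} and \Cref{p2}.
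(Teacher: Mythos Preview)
Your proposal is correct and follows essentially the same approach as the paper: establish Fej\'er monotonicity of the MAP and SPM sequences with respect to $X\cap Y$, then combine \Cref{c1} with \Cref{p2}. The paper's proof is terser (it simply asserts Fej\'er monotonicity as well known and cites \Cref{c1,p2}); your version spells out the Fej\'er argument and, helpfully, makes explicit the transfer of the Q-linear rate on distances to the R-linear rate on iterates via $\|z^k-\bar z\|\le 2\dist(z^k,X\cap Y)$, a point the paper leaves implicit in its invocation of \Cref{p2}.
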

\begin{proof}
The fact that $(y^k)_{k\in \na}, (s^k)_{k\in \na}$ are Fej\'er monotone with respect to $X\cap Y$ is well known and is an immediate consequence of the firm nonexpansiveness of $P_X, P_Y$. Then, the result follows from \Cref{c1,p2}.
\end{proof}

We remark that the MAP sequence converges quite faster than the SPM  one, in terms of the upper bound of their asymptotic error constants, since $\beta^2\le\frac{1+\beta}{2}$, and the difference becomes more significant as $\beta$ approaches $0$. However, SPM may be faster in the presence of parallel processors. Observe first that the expensive steps in both algorithms are the computation of $P_X,P_Y$. In the absence of parallel processors, each iteration of either MAP or SPM requires two projections (one onto $X$ and one onto $Y$), and the work per iteration is about the same for both methods. On the other hand, in MAP both projections must be computed sequentially, while in SPM they can be computed simultaneously if two parallel processors are available. In such a situation, one step of MAP is equivalent to two of SPM, and the asymptotic constants, in terms of the number of projections, become $\beta$ and $\frac{1+\beta}{2}$ respectively. Yet, MAP wins over 
SPM, which turns out to be indeed competitive when used for finding a point in the intersection of $m$ sets ($m\ge 3$). In this case, assuming that $m$ parallel processors are available, one step of MAP is equivalent to $m$ steps of SPM. For more information on sequential versus simultaneous methods; see \cite{Censor:1998}.

 
We establish now the R-linear convergence of the cCRM sequence under \ref{eb}, and give an upper bound for the asymptotic constant
 in terms of the constant $\omega$ in \ref{eb}.
 
Recall that, given $z\in\re^n$,
we denote 
$z_{\Cent}=\frac{1}{2}(z_{\MAP}+P_X(z_{\MAP}))$,
$T(z)= {\rm circ}\{z_{\Cent},R_X(z_{\Cent}), R_Y(z_{\Cent})\}$, with $z_{\MAP}$ as in the previously defined. The cCRM sequence $(z^k)_{k\in \na}\subset\re^n$  is given by $z^{k+1}=T(z^k)$, for any $z^0\in\re^n$.
 
Next, we present  an upper bound for the advance ratio of a cCRM step towards the solution set.

\begin{proposition}[Linear convergence of the distance for cCRM]\label[proposition]{p3}
  Assume that \ref{eb} holds around $\bar z\in X\cap Y$ with $\omega \in (0,1)$ and neighborhood $V$. Let $B$ be a ball centered at $\bar z$ and contained in $V$. Define $\beta=\sqrt{1-\omega^2}$. Then,
  \begin{equation}\label{e9}
  \dist(T(z),X\cap Y)\le\beta^2\left(\frac{1+\beta}{2}\right)\dist(z,X\cap Y),
  \end{equation}
  for all $z\in B$.
  \end{proposition}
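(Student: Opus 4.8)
The plan is to decompose a full cCRM step $z\mapsto T(z)$ into its three constituent pieces --- a MAP step, an SPM (simultaneous projection) step, and a pCRM circumcenter step --- and to bound the effect of each on the distance to $X\cap Y$, using the estimates already available. Concretely, writing $z_{\MAP}=P_YP_X(z)$, $z_{\Cent}=\tfrac12(z_{\MAP}+P_X(z_{\MAP}))$ and $T(z)=\pCRMOp(z_{\Cent})$, the distance to $X\cap Y$ will be tracked along the chain $z\rightsquigarrow P_X(z)\rightsquigarrow z_{\MAP}\rightsquigarrow z_{\Cent}\rightsquigarrow T(z)$.

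First I would handle the step from $z$ to $z_{\Cent}$. Recall from the proof of \Cref{p1} (inequality \cref{e5}) that for $z\in B$, since $\max\{\norm{z-P_X(z)},\norm{z-P_Y(z)}\}\ge\norm{z-P_X(z)}$ cannot in general be assumed, I instead proceed as in that proof: if $\norm{z-P_X(z)}$ realizes the max then $\dist(P_X(z),X\cap Y)\le\beta\dist(z,X\cap Y)$, and if instead $\norm{z-P_Y(z)}$ realizes it, then $\dist(P_Y(z),X\cap Y)\le\beta\dist(z,X\cap Y)$ while $\dist(P_X(z),X\cap Y)\le\dist(z,X\cap Y)$ by nonexpansiveness. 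Then observe that $z_{\Cent}=\tfrac12(z_{\MAP}+P_X(z_{\MAP}))=T_{\SPM}(z_{\MAP})$ where $z_{\MAP}=P_Y(P_X(z))$. Since $P_X(z)\in B$ (as $\norm{P_X(z)-\bar z}\le\norm{z-\bar z}$), the MAP estimate \cref{e2} applied at $P_X(z)$ — which is legitimate because $P_X(z)\in X$, hence plays the role of ``$z\in Y$'' up to swapping $X,Y$, so one gets $\dist(z_{\MAP},X\cap Y)=\dist(P_Y(P_X(z)),X\cap Y)\le\beta^2\dist(P_X(z),X\cap Y)$ — combined with the SPM estimate \cref{e3} for the final averaging, gives a bound of the shape $\dist(z_{\Cent},X\cap Y)\le\tfrac{1+\beta}{2}\cdot(\text{something}\le\beta^2)\cdot\dist(z,X\cap Y)$. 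Care with which projection realizes the max in \ref{eb} is where the bookkeeping is delicate; I would organize it so that the $\beta^2$ always comes from one MAP-type contraction and the $\tfrac{1+\beta}{2}$ from the SPM averaging, and check that $P_X(z),z_{\MAP},P_X(z_{\MAP})$ all stay in $B$ via Fej\'er/nonexpansiveness so that \ref{eb} remains applicable at each stage.

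Next I would handle the pCRM step $z_{\Cent}\mapsto T(z)=\pCRMOp(z_{\Cent})$. Here \Cref{Lemma:MAPcentral} guarantees $z_{\Cent}$ is centralized, and \Cref{lemma-quasinonexpCRM} gives $\norm{\pCRMOp(z_{\Cent})-s}\le\norm{z_{\Cent}-s}$ for all $s\in X\cap Y$; taking $s=P_{X\cap Y}(z_{\Cent})$ yields $\dist(T(z),X\cap Y)\le\dist(z_{\Cent},X\cap Y)$. So the circumcenter step is (at worst) nonexpansive toward $X\cap Y$ and does not spoil the contraction already obtained. Chaining this with the bound from the previous paragraph gives $\dist(T(z),X\cap Y)\le\beta^2\bigl(\tfrac{1+\beta}{2}\bigr)\dist(z,X\cap Y)$, which is exactly \cref{e9}.

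The main obstacle, as flagged above, is the case analysis forced by the $\max\{\dist(z,X),\dist(z,Y)\}$ on the right-hand side of \ref{eb}: unlike the MAP setting where the iterates lie in $Y$ and the max is automatically $\dist(\cdot,X)$, here $z$ is an arbitrary point of $B$, so I must argue separately according to which set is farther, and in the ``$Y$ is farther'' branch I recover the missing contraction from the composition with $P_Y$ inside $z_{\MAP}=P_YP_X(z)$ rather than from the first projection $P_X$. A clean way to avoid duplicating work is to note that, regardless of the branch, the composite map $z\mapsto z_{\MAP}=P_YP_X(z)$ already satisfies $\dist(z_{\MAP},X\cap Y)\le\beta^2\dist(z,X\cap Y)$ for $z\in B$ (this is essentially \Cref{p1}\cref{e2} with the roles of the two projections used symmetrically, since $P_X(z)\in X$ and then one MAP-type contraction applies at $P_X(z)$, plus a nonexpansive step — but in fact one obtains the full $\beta^2$ by applying \ref{eb} once at $z$ and once at $P_X(z)$, both in $B$), and then the averaging $z_{\Cent}=T_{\SPM}(z_{\MAP})$ contributes the extra factor $\tfrac{1+\beta}{2}$ via \cref{e3} applied at $z_{\MAP}\in B$, and finally the circumcenter step is nonexpansive. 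I expect the author's proof to proceed along these lines.
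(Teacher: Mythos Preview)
There is a genuine gap. Your chain loses one factor of $\beta$. The claim that $\dist(z_{\MAP},X\cap Y)\le\beta^2\dist(z,X\cap Y)$ for \emph{arbitrary} $z\in B$ is not justified: the argument ``apply \ref{eb} once at $z$ and once at $P_X(z)$'' only yields a contraction at $z$ when $\dist(z,X)$ realizes the maximum in \ref{eb}. If instead $\dist(z,Y)>\dist(z,X)$, firm nonexpansiveness of $P_X$ gives merely $\dist(P_X(z),X\cap Y)\le\dist(z,X\cap Y)$, and the subsequent $P_Y$ step (applied at $P_X(z)\in X$, where the max \emph{is} $\dist(\cdot,Y)$) supplies only one $\beta$. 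So in general you only get $\dist(z_{\MAP},X\cap Y)\le\beta\,\dist(z,X\cap Y)$, and your full chain produces $\beta\cdot\tfrac{1+\beta}{2}$, not $\beta^2\cdot\tfrac{1+\beta}{2}$.

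The paper recovers the missing $\beta$ from the circumcenter step itself, which you treated as merely nonexpansive. The point is that $z_C$ has special structure: since $z_C$ is the midpoint of $z_{\MAP}\in Y$ and $P_X(z_{\MAP})$, one has $P_X(z_C)=P_X(z_{\MAP})$ and $\lV P_X(z_C)-z_C\rV=\lV z_{\MAP}-z_C\rV\ge\lV P_Y(z_C)-z_C\rV$, so $\dist(z_C,X)$ always realizes the max in \ref{eb} at $z_C$. Combining this with $T(z)=P_{S_X\cap S_Y}(z_C)$ (\Cref{lemma:ZcCharacterization}) and firm nonexpansiveness of that projection gives $\lV T(z)-z_C\rV\ge\dist(z_C,X)\ge\omega\,\dist(z_C,X\cap Y)$, hence $\dist(T(z),X\cap Y)\le\beta\,\dist(z_C,X\cap Y)$. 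This extra $\beta$, together with the bound $\dist(z_C,X\cap Y)\le\tfrac{\beta(1+\beta)}{2}\dist(P_X(z),X\cap Y)\le\tfrac{\beta(1+\beta)}{2}\dist(z,X\cap Y)$ (which is what your argument correctly gives), yields \cref{e9}.
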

  \begin{proof}
  By nonexpansiveness of $P_X$, we get similarly as in \cref{e8} that
  \begin{equation}\label{eb0}
  \dist(P_X(z),X\cap Y)\le \dist(z,X\cap Y).
  \end{equation}
  Note that nonexpansiveness of $P_X, P_Y$ imply that
  $\lV z_{\MAP}-\bar z\lV\le\rV z-\bar z\rV$ for all $\bar z \in X\cap Y$, so that
  $z_{\MAP}\in B$ whenever $z\in B$. With the same argument used in the proof of \Cref{p1} for
  establishing \cref{e5}, we get
  \begin{equation}\label{eb1}
  \dist(z_{\MAP},X\cap Y)\le\beta\dist(P_X(z),X\cap Y),
  \end{equation}
  and
  \begin{equation}\label{eb2}
  \dist(P_X(z_{\MAP}),X\cap Y)\le\beta^2 \dist(P_X(z),X\cap Y).
  \end{equation}
  Again, nonexpansivenes of $P_X$ ensures that $P_X(z_{\MAP})$ is closer than $z_{\MAP}$ to any point in $X\cap Y$, so that $P_X(z_{\MAP})$ belongs to $B$ whenever $z\in B$. Hence, with the same argument as in the proof of \Cref{p1} for establishing \cref{ee7}, we get
  \begin{align}
  \dist(z_{\Cent},X\cap Y)& =
  \dist\left(\frac{1}{2}(z_{\MAP}+P_X(z_{\MAP})),X\cap Y\right) \\ & \le\frac{1}{2}
  \left(\dist(z_{\MAP},X\cap Y)+\dist(P_X(z_{\MAP}),X\cap Y\right) )
  \\
  &\le\frac{1}{2}\beta(1+\beta)
  \dist(P_X(z),X\cap Y),\label{eb3}
  \end{align}
  using \cref{eb1,eb2} in the last inequality.
  We invoke now  \cref{lemma:ZcCharacterization} for $z_{\Cent}$, which implies
  that $T(z)=P_{S^{z_{\Cent}}_X\cap S^{z_{\Cent}}_Y}(z_{\Cent})$. Recall that $S^{z_{\Cent}}_X=\{w\in \re^n \mid \scal{w-P_X(z_{\Cent})}{z_{\Cent} - P_X(z_{\Cent})}\leq 0\}$ and $S^{z_{\Cent}}_Y=\{w\in \re^n \mid \scal{w-P_Y(z_{\Cent})}{z_{\Cent} - P_Y(z_{\Cent})}\leq 0\}$, so we easily get that $P_X(z_{\MAP})=P_{S^{z_{\Cent}}_X}(z_{\MAP})$. Since $T(z)\in S^{z_{\Cent}}_X\cap S^{z_{\Cent}}_Y\subset S^{z_{\Cent}}_X$, we get that
  \begin{equation}\label{eb4} 
  \lV T(z)-z_{\Cent}\rV\ge\lV P_X(z_{\Cent})-z_{\Cent}\rV.
  \end{equation}
 
  We claim that $\lV P_X(z_{\Cent})-z_{\Cent}\rV\ge\lV P_Y(z_{\Cent})-z_{\Cent}\rV$.
  Indeed, since $z_{\Cent}$ is the midpoint between $P_X(z_{\MAP})$ and
  $z_{\MAP}$, we get that $P_X(z_{\Cent})=P_X(z_{\MAP})$ and also
  \[
  \lV P_X(z_{\Cent})-z_{\Cent}\rV=\lV P_X(z_{\MAP})-z_{\Cent}\rV=\lV z_{\MAP}-z_{\Cent}\rV\ge 
  \lV P_Y(z_{\Cent})-z_{\Cent}\rV,
  \]
  using the fact that $z_{\MAP}\in Y$ in the last inequality. The claim holds, and hence 
  \[
  \max\{\lV P_X(z_{\Cent})-z_{\Cent}\rV,\lV P_Y(z_{\Cent})-z_{\Cent}\rV\}=
  \lV P_X(z_{\Cent})-z_{\Cent}\rV.
  \]
  
  It follows from \Cref{lemma-quasinonexp-pCRM-combined} that
  $z_{\Cent}$ belongs to $B$ whenever $z\in B$. Hence, we invoke \ref{eb}, which implies that
  \begin{equation}\label{eb5}
  \lV P_X(z_{\Cent})-z_{\Cent}\rV\ge\omega \dist(z_{\Cent}, X\cap Y).
  \end{equation}
  Combining \cref{eb4,eb5}
  we get
  \begin{equation}\label{eb6}
  \lV T(z)-z_{\Cent}\rV\ge\omega \dist(z_{\Cent}, X\cap Y).
  \end{equation}
  Moreover, since
  $T(z)=P_{S^{z_{\Cent}}_X\cap S^{z_{\Cent}}_Y}(z_{\Cent})$ and $X\cap Y\subset S^{z_{\Cent}}_X\cap S^{z_{\Cent}}_Y$,
  we have that 
  \[
  \lV T(z)-s\rV^2\le\lV z_{\Cent}-s\rV^2-\lV T(z)-z_{\Cent}\rV^2
  \]
  for all $s\in X\cap Y$, so that
  \begin{align}
  \dist^2(T(z),X\cap Y)& \le \dist^2(z_{\Cent},X\cap Y)-\lV T(z)-z_{\Cent}\rV^2 \\ & \le
  \dist^2(z_{\Cent},X\cap Y)-\omega^2\dist^2(z_{\Cent}, X\cap Y)=\beta^2\dist^2(z_{\Cent}, X\cap Y),\label{eb7}
  \end{align}
  using \cref{eb6} in the last inequality. It follows from \cref{eb7}
  that 
  \begin{equation}\label{eb8}
  \dist(T(z),X\cap Y)\le\beta \dist(z_{\Cent},X\cap Y).
  \end{equation}

  Combining \cref{eb0,eb3,eb8},
  we get
  \begin{align}
  \dist(T(z),X\cap Y)& \le\beta \dist(z_{\Cent},X\cap Y)\le\frac{1}{2}\beta^2(1+\beta)
  \dist(P_X(z),X\cap Y)
  \\ & \le\frac{1}{2}\beta^2(1+\beta)\dist(z,X\cap Y),
  \end{align}
  which establishes the result.
  \end{proof}

In the following, we get from \Cref{p3} the R-linear convergence result for the cCRM sequence, in a way similar to the proof of R-linear convergence
of the MAP and SPM in \Cref{c2}.

\begin{theorem}[Linear convergence of cCRM]\label{t1}
    Assume that $X\cap Y\ne\emptyset$.
    Let $(z^k)_{k\in\na}$ be the sequence generated
    by cCRM starting from some $z^0\in\re^n$. Assume also that $(z^k)_{k\in\na}$ is an infinite sequence. If \ref{eb} holds around the limit $\bar z$ of $(z^k)_{k\in\na}$ with $\omega \in (0,1)$ and neighborhood $V$, then the sequence $(z^k)_{k\in\na}$
    converges R-linearly to some point in $X\cap Y$, with asymptotic constant bounded above by $\beta^2\left(\frac{1+\beta}{2}\right)$, where $\beta=\sqrt{1-\omega^2}$ and $\omega$ is the constant in Assumption \ref{eb}.
    \end{theorem}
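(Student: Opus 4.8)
The plan is to follow the template already used for the R-linear convergence of MAP and SPM in \Cref{c2}, now driven by the one-step contraction of \Cref{p3} in place of \Cref{p1}. First I would invoke \Cref{thm:pCRM-centralized} to guarantee that the cCRM sequence $(z^k)_{k\in\na}$ is well defined and converges to some $\bar z\in X\cap Y$; by hypothesis \ref{eb} holds around this $\bar z$, so there are a neighborhood $V$ of $\bar z$ and a constant $\omega\in(0,1)$ as in \Cref{def:EB}. Fix a closed ball $B=B(\bar z,\rho)\subset V$ centered at $\bar z$. The crucial localization step is that, once an iterate lands in $B$, the whole tail of the sequence stays in $B$: indeed, \Cref{lemma-quasinonexp-pCRM-combined} shows that $(z^k)_{k\in\na}$ is Fej\'er monotone with respect to $X\cap Y$, and since $\bar z\in X\cap Y$ this gives $\norm{z^{k+1}-\bar z}\le\norm{z^k-\bar z}$; combined with $z^k\to\bar z$, there is an index $k_0$ with $\norm{z^{k_0}-\bar z}<\rho$, and then $z^k\in B$ for every $k\ge k_0$.

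Next I would apply \Cref{p3} along this tail. For every $k\ge k_0$ we have $z^k\in B$, so \cref{e9} yields $\dist(z^{k+1},X\cap Y)=\dist(T(z^k),X\cap Y)\le\beta^2(\tfrac{1+\beta}{2})\,\dist(z^k,X\cap Y)$, with $\beta=\sqrt{1-\omega^2}$. Since $\omega\in(0,1)$ we have $\beta\in(0,1)$, hence the factor $c\coloneqq\beta^2(\tfrac{1+\beta}{2})$ lies in $(0,1)$. Iterating, $\dist(z^k,X\cap Y)\le c^{\,k-k_0}\dist(z^{k_0},X\cap Y)\to 0$, and $\limsup_{k\to\infty}\dist(z^{k+1},X\cap Y)/\dist(z^k,X\cap Y)\le c<1$, so the scalar sequence $(\dist(z^k,X\cap Y))_{k\in\na}$ converges Q-linearly to $0$ with asymptotic constant at most $c$, in the sense of \Cref{d2}. (If $\dist(z^k,X\cap Y)=0$ for some $k$, then $z^k\in X\cap Y=\Fix T$ by \Cref{lemma-FixCirc-Cent} and the sequence is eventually constant, so the conclusion is trivial; hence one may assume $z^k\notin X\cap Y$ for all $k$, which also makes the Q-linear ratio well defined.)

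Finally, since $(z^k)_{k\in\na}$ is Fej\'er monotone with respect to $X\cap Y$ by \Cref{lemma-quasinonexp-pCRM-combined} and $(\dist(z^k,X\cap Y))_{k\in\na}$ converges Q-linearly to $0$, \Cref{p2} gives that $(z^k)_{k\in\na}$ converges R-linearly to its limit $\bar z\in X\cap Y$, with asymptotic constant bounded above by $\beta^2(\tfrac{1+\beta}{2})$, as claimed. I expect the only genuinely delicate point to be the localization argument of the first paragraph — ensuring that \emph{all} iterates past some index remain inside the neighborhood $V$ on which \ref{eb} is available — but this is precisely what Fej\'er monotonicity with respect to $X\cap Y$ delivers, so no new idea beyond what is already present in the proofs of \Cref{thm:pCRM-centralized,c2} is required; the remainder is the routine combination of \Cref{p3,p2}.
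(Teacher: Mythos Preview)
Your proposal is correct and follows essentially the same approach as the paper's own proof: invoke \Cref{thm:pCRM-centralized} for convergence, localize the tail in a ball where \ref{eb} applies, use \Cref{p3} for Q-linear decay of $\dist(z^k,X\cap Y)$, and conclude R-linear convergence via Fej\'er monotonicity and \Cref{p2}. The only cosmetic difference is that you obtain Fej\'er monotonicity directly from \Cref{lemma-quasinonexp-pCRM-combined}, whereas the paper argues it by expressing $T$ as a composition of nonexpansive operators whose fixed-point sets contain $X\cap Y$; your route is at least as clean, and your explicit treatment of the localization step is a welcome addition.
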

\begin{proof}
Convergence of $(z^k)_{k\in\na}$ to a point $\bar z\in X\cap Y$  follows from \cref{thm:pCRM-centralized}. Hence, for large enough $k$, $z^k$ belongs to the ball centered at $\bar z$ and contained in $V$, whose existence in ensured by Assumption \ref{eb}. 

We recall that the cCRM sequence is defined as $z^{k+1}=T(z^k)$,
so that it follows from \Cref{p3} that
\begin{equation}\label{eb9}
\frac{\dist(z^{k+1}, X\cap Y)}{\dist(z^k,X\cap Y)}\le\beta^2\left(\frac{1+\beta}{2}\right).
\end{equation}
Since $\omega\in(0,1)$ implies that $\beta^2(\frac{1+\beta}{2})\in (0,1)$,
it follows immediately from \cref{eb9} that the scalar sequence 
$(\dist(z^k,X\cap Y))_{k\in\na}$ converges Q-linearly to $0$ with asymptotic constant bounded above by $\beta^2\left(\frac{1+\beta}{2}\right)$.

Finally, recall that sequence $(z^k)_{k\in\na}$ is Fej\'er monotone with respect to $X\cap Y$, due to \cref{thm:pCRM-centralized}. The R-linear convergence of $(z^k)_{k\in\na}$
to some point in $X\cap Y$ and the value of the upper bound of the asymptotic constant follow then from \Cref{p2}.
\end{proof}

In addition to establishing linear convergence of cCRM under the error bound condition, \cref{t1} provides an upper bound for cCRM's linear rate that is the product of the reduction factors of MAP, namely $\beta^2$,  and SPM, namely $\frac{1+\beta}{2}$. 
In view of the superlinear convergence result that we are going to present in the next subsection and our numerical results, the actual asymptotic constant for cCRM seems to be quite smaller than the upper bound presented in \Cref{p2,t1}. The issue of improving this upper bound deserves further research.

\subsection{Superlinear convergence of cCRM}\label{sec:superlinearcCRM}

In previous works it was discussed that circumcentering-type schemes have a Newtonian flavor, meaning that superlinear convergence could be expected. This was derived in a very limited setting, namely, for CRM as a root finder of smooth convex functions in \cite[Corollary 4.11]{Arefidamghani:2021}. Another reference that addresses particular examples showing superlinear convergence of circumcentering techniques is \cite{Dizon:2022}. With our novel approach of centralizing CRM, we are able to cover a way broader class of convex feasibility problems, including nonsmooth ones,  for which we get superlinear convergence. Our result states that if the boundaries of $X$ and $Y$ are locally differentiable manifolds, and the interior of $X\cap Y$ is nonemtpy, cCRM converges superlinearly to a solution of the CFP \cref{eq:CFP}. The main theorem relies on forthcoming lemma and  proposition.

\begin{lemma}[Superlinear convergence of the distance for cCRM]\label[lemma]{lemma:superlinearConvDist}
  Let $X,Y\subset \re^n$ be closed, convex and  suppose $X\cap Y \neq \emptyset$. Let $(z^k)_{k\in\na}$ be the sequence generated  by cCRM starting from some $z^0\in\re^n$, and converging to a point $\bar z \in X\cap Y$. Assume that the interior of $X\cap Y$ is nonempty, and that the boundaries of $X$ and $Y$ are differentiable manifolds in a neighborhood of  $\bar z$.  Then, the scalar sequence  $(\dist(z^k, X\cap Y))_{k\in \na}$ converges to zero superlinearly. 
\end{lemma}

\begin{proof}
  If the sequence  $(z^k)_{k\in\na}$ is finite then the announced result follows trivially.  
    So, let us assume that it is infinite. 

    In order to prove the superlinear convergence stated in the lemma, \emph{i.e.}, 
    \[\label{eq:superlindis1}
      \lim_{k\to \infty} \frac{\dist(z^{k+1}, X\cap Y)}{\dist(z^k, X\cap Y)}   = 0,
    \]
it suffices to show that 
\[\label{eq:superlindis2}
  \lim_{k\to \infty} \frac{\dist(z^{k+1}, X\cap Y)}{\dist(z_{\Cent}^k, X\cap Y)}   = 0,
\]
where $z_{\Cent}^{k}$ is the centralized point associated to $z^{k}$. Indeed, if we prove \cref{eq:superlindis2}, then \cref{eq:superlindis1} follows since  $\dist(z_{\Cent}^k,X\cap Y) \leq \dist(z^k,X\cap Y)$, by the Fejér monotonicity of the centralization procedure guaranteed in \Cref{lemma-quasinonexp-Cent}. 

Recall that
\[
  z^{k+1} =   \pCRMOp(z_{\Cent}^k) = \circum\{ z_{\Cent}^k, R_X(z_{\Cent}^k), R_Y(z_{\Cent}^k)\},
\]
and because $z_{\Cent}^k$ is strictly centralized,  $z^{k+1} $ was characterized in \Cref{lemma:ZcCharacterization} as $P_{H_X^{k}\cap H_Y^{k}}(z_{\Cent}^k)$, where  $H_X^k \coloneqq H_X^{z_{\Cent}^k}$ and $H_Y^k \coloneqq H_Y^{z_{\Cent}^k}$ are the hyperplanes passing through $P_{X}(z_{\Cent}^k)$ and $P_{Y}(z_{\Cent}^k)$ that are orthogonal to $z_{\Cent}^k - P_{X}(z_{\Cent}^k)$ and $z_{\Cent}^k - P_{X}(z_{\Cent}^k)$, respectively. Therefore, since $z^{k+1} \in H_X^{k}\cap H_Y^{k} $, Pythagoras gives us
\begin{align}
    \norm{z^{k+1} - P_{X}(z_{\Cent}^{k})}^2 = \norm{z^{k+1} - z_{\Cent}^{k}}^2 - \norm{z^{k}_C - P_{X}(z_{\Cent}^{k})}^2 ,   
\end{align}
which implies
\[\label{eq:foo3_super}
  \norm{z^{k+1} - P_{X}(z_{\Cent}^{k})} \leq \norm{z^{k+1} - z_{\Cent}^{k}}.\]  
By the same token, we also get 
  \[\label{eq:foo4_super}
    \norm{z^{k+1} - P_{Y}(z_{\Cent}^{k})} \leq \norm{z^{k+1} - z_{\Cent}^{k}}.
\]

The hypothesis    $\inte (X\cap Y)\neq \emptyset$  gives us an error bound    $\omega \in (0,1)$  as in \ref{eb}. In particular, there exists  $\hat k\in \na$ such that 
\[\label{eq:errorboundzk}
 \omega \dist(z
 ^{k},X\cap Y) \leq   \max \{\dist(z
 ^{k}, X), \dist(z
 ^{k}, Y)\},
\]
for all $k\geq \hat k$; see Corollary 5.14 of \cite{Bauschke:1996}.

In addition, the nonemptiness of the interior of $ X\cap Y$, together with the hypothesis that the boundaries of $X$  and $Y$ are locally differentiable manifolds, imply that these manifolds have dimension  $n-1$.

Now, we claim that if $M\subset \RR^n$ is a differentiable manifold of dimension  $n-1$, $\bar z$ belongs to $M\subset \RR^{n}$
and $z\in \RR^n$ belongs to the tangent hyperplane to $M$ at $\bar z$, say $T_M(\bar z)$, then
\begin{equation}\label{v1}
\lim_{z\to\bar z}\frac{\dist(z,M)}{\lV z-\bar z\rV}=0.
\end{equation}
This result follows, with an elementary analysis argument, from the well known fact that
$M$ can be locally written as $\{z\in \re^n \mid g(z)=0\}$ for some function $g:\re^n\to\re$ of class ${\cal C}^1$
with $g(\bar z)=0, \nabla g(\bar z)\ne 0$, so that 
$T_M(\bar z)\coloneqq \{z\in\re^n\mid  \scal{\nabla g(\bar z)}{z-\bar z}=0\}$.

Thus, the hyperplanes $H_X^{k}$ and $H_Y^{k}$, which have dimension $n-1$ each, are tangent to the manifolds~\cite[Theorems 23.2 and 25.1]{Rockafellar:1997}, for all large $k$,  respectively at $P_{X}(z_{\Cent}^{k})$ and $P_{Y}(z_{\Cent}^{k})$. 
Since $z^{k+1}$ lies in both $H_X^{k}$ and $H_Y^{k}$, we have, in view of \cref{v1},  
\[\label{eq:foo1_super}
  \lim_{k\to \infty} \frac{\dist(z^{k+1},X)}{\norm{z^{k+1} - P_{X}(z_{\Cent}^{k})}} = 0
\]
and
\[\label{eq:foo2_super}
  \lim_{k\to \infty} \frac{\dist(z^{k+1},Y)}{\norm{z^{k+1} - P_{Y}(z_{\Cent}^{k})}} = 0.
\]

From \cref{lemma-quasinonexpCRM}, it holds that 
\(\norm{z^{k+1} - z_{\Cent}^{k}} \leq \dist(z_{\Cent}^{k}, X\cap Y),
\) which combined with the previous inequalities \cref{eq:foo3_super,eq:foo4_super}, implies that 
\[
\norm{z^{k+1} - P_{X}(z_{\Cent}^{k})} \leq   \dist(z_{\Cent}^{k}, X\cap Y) 
\]
and
\[
\norm{z^{k+1} - P_{Y}(z_{\Cent}^{k})} \leq   \dist(z_{\Cent}^{k}, X\cap Y). 
\]
Hence, from \cref{eq:foo1_super,eq:foo2_super} we get  
\[
  \lim_{k\to \infty} \frac{\dist(z^{k+1},X)}{\dist(z_{\Cent}^{k}, X\cap Y)} = 0
\]
and
\[
  \lim_{k\to \infty} \frac{\dist(z^{k+1},Y)}{\dist(z_{\Cent}^{k}, X\cap Y)} = 0.
\]

Note that \cref{eq:errorboundzk} yields 
\[
 \omega \frac{\dist(z
 ^{k},X\cap Y)}{\dist(z_{\Cent}^{k}, X\cap Y)} \leq   \max \left\{\frac{\dist(z
 ^{k}, X)}{\dist(z_{\Cent}^{k}, X\cap Y)}, \frac{\dist(z
 ^{k}, Y)}{\dist(z_{\Cent}^{k}, X\cap Y)} \right\}.
\]
Taking limits as $k\to \infty$,  we obtain \cref{eq:superlindis2} and the proof is completed.  
\end{proof}

Similarly to \Cref{p2}, we establish now a result stating that if a sequence  is Fej\'er monotone with respect to a given closed convex set and the distance of the sequence to that said set converges superlinearly to zero, then the sequence itself  also converges superlinearly to a point in the corresponding set.

\begin{proposition}[Fejér monotonicity and superlinear convergence]\label[proposition]{prop:superlinearFejer}
  If a sequence $(w^k)_{k\in \na}\subset\re^n$ is Fej\'er monotone with respect to a closed convex set  $M\subset\re^n$ and the scalar sequence $(\dist(w^k,M))_{k\in\na}$ converges superlinearly
  to $0$, then $(w^k)_{k\in \na}$ converges superlinearly to a point $\bar{w}\in M$.
  \end{proposition}
  
  \begin{proof} 
Let $(w^k)_{k\in \na}\subset\re^n$ be a  Fej\'er monotone sequence with respect to the closed convex set $M\subset\re^n$. Suppose that the scalar sequence 
$(\dist(w^k,M))_{k\in\na}$ converges superlinearly
to $0$, that is, that 
\[\label{eq:superlinear_dist_Fejer}
  \lim_{k\to \infty}
\frac{\dist(w^{k+1}, M)}{\dist(w^k, M)} = 0.
\]
Assumption \cref{eq:superlinear_dist_Fejer} promptly yields $\dist(w^{k}, M) \to 0$, and then appealing to  \cite[Theorem 5.11]{Bauschke:2017a} we get that $(w^k)_{k\in \na}$ has a limit point $\bar w \in M$.

Note now that, for any $m,k\in \na$, we have 
    \begin{align}
      \norm{w^{k+1} - w^{m+k+1}} & \leq  \norm{w^{k+1} - P_{M}(w^{k+1})} + \norm{ w^{m+k+1} - P_{M}(w^{k+1})}     \\ 
      & \leq \norm{w^{k+1} - P_{M}(w^{k+1})}  + \norm{ w^{k+1} - P_{M}(w^{k+1}) }  \\
      &  = 2\dist(w^{k+1}, M),\label{eq:prop_superlinearfoo1} \end{align}
        where, in the second inequality, we use $m$ times the Fejér monotonicity  of sequence   $(w^k)_{k\in\na}$.
        Taking the limit as $m\to \infty$ in \cref{eq:prop_superlinearfoo1} gives us, for all $k\in \na$,  
        \begin{align}
          \norm{w^{k+1} - \bar w} & \leq  2\dist(w^{k+1}, M).
          \end{align}
      Since $\dist(w^k, M) \leq \norm{w^k - \bar w}$, we conclude that 
      \begin{align}
        \frac{\norm{w^{k+1} - \bar w}}{ \norm{w^k - \bar w}} & \leq  2\frac{\dist(w^{k+1}, M)}{\dist(w^k, M)}.
      \end{align}

Finally, the limit,  as $k \to \infty$, of the right-hand side of this inequality goes to zero, due to   \cref{eq:superlinear_dist_Fejer}. Therefore, we get the superlinear convergence of $(w^k)_{k\in \na}$ to $\bar w$, as required.
  \end{proof}

We can now prove the superlinear convergence cCRM, when the interior of the intersection of $X$ and $Y$ is nonempty, and the boundaries  of $X$ and $Y$ are locally smooth manifolds.

\begin{theorem}[Superlinear convergence of cCRM]\label{theorem:superlinearConv}
  Let $X,Y\subset \re^n$ be closed, convex and  suppose $X\cap Y \neq \emptyset$. Let $(z^k)_{k\in\na}$ be the sequence generated  by cCRM starting from some $z^0\in\re^n$, and converging to a point $\bar z \in X\cap Y$. Assume that the interior of $X\cap Y$ is nonempty, and that the boundaries of $X$ and $Y$ are differentiable manifolds in a neighborhood of  $\bar z$.  Then, $(z^k)_{k\in\na}$ converges to $\bar z$ superlinearly. 
\end{theorem}

\begin{proof}
  The theorem is a direct  consequence of the Fejér monotonicity of  $(z^k)_{k\in\na}$ with respect to $X\cap Y$ given in \Cref{thm:pCRM-centralized}, together with  \Cref{lemma:superlinearConvDist,prop:superlinearFejer}.
\end{proof}

To the best of our knowledge, \Cref{theorem:superlinearConv} is so far the strongest result regarding the convergence rate of circumcenter-type methods.   

\section{Numerical experiments}\label{s6}



In this section,  we study the performance of cCRM by means of numerical comparisons with two other methods, namely, MAP and CRMprod. Since we are not using parallel computation, it is well-known that SPM underperforms when compared with MAP, thus we do not include SPM in our report. 

The experiments address two classes of non-affine convex intersection problems. We first seek a common point of two ellipsoids, and then we consider the problem of finding a point in the intersection of a second order cone and a polyhedron.   In the first class of experiments we are able to illustrate the superlinear convergence of cCRM stated in \Cref{theorem:superlinearConv} since the boundary of an ellipsoid is a differentiable manifold. The numerical results for the second class of problems show much faster convergence of cCRM in comparison to MAP and CRMprod.

We remark that each iteration of cCRM requires four orthogonal projections. In principle, they seem to be five: we compute three sequential projections from $z=z^k$ onto
$X, Y$ and again onto $X$ for obtaining $z_{\MAP}$, and then we project $z_{\MAP}$ onto $X$ and $Y$ for getting $z_{\Cent}$. However, as observed above, $P_X(z_{\Cent})=P_X(z_{\MAP})$, because $z_{\Cent}$ is in the segment between
$z_{\MAP}$ and $P_X(z_{\MAP})$. On the other hand, both MAP and CRMprod require just two projections per iteration. Therefore, for a fair comparison we count the number of projections required by each method in order to achieve the desired precision. We also mention that the cost of the step from $z_{\Cent}$ to $T(z)$ is indeed negligible because the computation of a circumcenter reduces to solving a  $2 \times 2$ system of linear equations. Explicit formulas for computing even more general  circumcenters are presented in \cite[Theorem 4.1]{Bauschke:2018} and \cite[Section 3]{Behling:2018a}.

The computational experiments were carried out on an Intel Xeon W-2133 3.60GHz with 32GB of RAM running Ubuntu 20.04. The codes were implemented   in \texttt{Julia}  programming language v1.8~\cite{Bezanson:2017}, and  are  available at \url{https://github.com/lrsantos11/CRM-CFP}.

\subsection{Intersection of Two Ellipsoids}\label{sec:intertwoellipsoids}

In this subsection, we consider  cCRM, MAP and CRMprod for solving the particular CFP  of finding a common point  in the intersection of two ellipsoids, that is, finding 
\[\label{eq:ellipsoids} \bar z \in \Eset \coloneqq \Eset_1\cap \Eset_2 \subset \re^n,\] where each ellipsoid  $\Eset_i$ is set as
\[\Eset_{i}\coloneqq \left\{z\in \re^n \mid g_i(z) \leq 0 \right\}, \text{ for } i=1,2, \]
where $g_i:\re^n\to\re$ is defined as $g_i(z) =  \scal{z}{ A_{i} z} +2 \scal{z}{b^{i}} -  \alpha_{i}$, each $A_{i}$ is a symmetric positive definite matrix, $b^{i}$ is an $n$-vector, and  $\alpha_{i}$ is a positive scalar.  Problem \cref{eq:ellipsoids}  has importance on its own; see \cite{Lin:2004,Jia:2017}.

To run the tests, we randomly produce instances of \cref{eq:ellipsoids} with the following procedure. We first form ellipsoid $\Eset_1$ by generating  a matrix $A_1$ of the form $A_1 = \gamma \Id +B_1^\top B_1$, with  $B_1 \in\re^{n\times n}$, $\gamma \in \re_{++}$.   Matrix $B_1$ is  sparse  with sparsity density  $p=2 n^{-1}$ and its components are sampled from the standard normal distribution.  Vector $b^1$ is sampled from the uniform distribution in the interval $[0,1]$. We then choose each $\alpha_1$ so that $\alpha_1 > (b^1)^\top A_1b^1$, which ensures that $0$ belongs to $\Eset_1$. Next, we construct $\Eset_2$ by  randomly choosing its center  $c_2$ outside $\Eset_1$, then we project $c_2$ onto  $\Eset_1$. We define    $d\coloneqq \lambda(P_{\Eset_1}(c_2) - c_2)$ as the principal axis of the ellipsoid correspondent to the least value of semi-axis. In order to get this, we form a diagonal matrix $\Lambda \coloneqq \diag(\norm{d}, u)$, where $u\in \re^{n-1}$ is a vector whose components are positive and have values greater than $\norm{d}$, and an orthogonal matrix $Q$ where the first column is $d/\norm{d}$. Finally, we set $A_2 \coloneqq Q^\top \Lambda^2 Q$ and $g_2(z)\coloneqq \scal{z-c_2}{A_2(z-c_2)}$. 
For all methods and instances, the initial point $z^0$ is sampled from the standard normal distribution guaranteeing that its norm is at least  \num{5} and also that $z^0 \notin \Eset_1\cap\Eset_2$.

The projections onto ellipsoids are computed using an alternating direction method of multipliers (ADMM) built suited for this end~\cite{Jia:2017}. 
For testing the methods, we generate two types of instances, with $n=\num{100}$. In the first group, $\Eset_1\cap\Eset_2$ has nonempty interior, and in the second one  $\Eset_1\cap\Eset_2$ is a singleton.

\subsubsection{$\Eset_1\cap\Eset_2$ with nonempty interior}\label{ss6.1}

We first randomly generate \num{30} instances where the intersection has nonempty interior. For that, we set $\lambda = 1.1$ such that $d\coloneqq 1.1\left(P_{\Eset_1}(c_2) - c_2\right)$. The rationale here is that though $\Eset_1\cap\Eset_2$ has indeed nonempty interior, the \num{1.1} multiplying factor guarantees that the intersection is \emph{not too large} (otherwise the problems are very easy to solve).

In the examples of this subsection the fact that the ellipsoids have their intersection with nonempty interior guarantees that the Slater condition holds. Moreover, the boundaries of ellipsoids are differentiable manifolds, so \Cref{theorem:superlinearConv} applies, and we get superlinear convergence of cCRM, which is not achievable by MAP. Therefore, there is no surprise in seeing in \Cref{fig:pp-Ellipsoid} and
 \Cref{table:ellipsoid}  cCRM vastly outperforming MAP. We note that cCRM also outperforms CRMprod by far.

Remind that we are considering the total number of projections employed by each method to achieve convergence upon the desired tolerance using the following criteria. For each sequence $({w}^k)_{k\in\na}$ yielded by the considered methods, we use as tolerance $\varepsilon \coloneqq \num{e-6}$ and as stopping criteria the gap distance 
 \[
 \lVert P_{\Eset_1}({w}^{k}) - {w}^k\rVert <  \varepsilon .
 \]
 This is a reliable measure of infeasibility because the Slater condition implies that Assumption \ref{eb} holds.
 We also set a budget of \num{10000} total number of allowed projections for each method.

\Cref{fig:pp-Ellipsoid} is a performance profile~\cite{Dolan:2002}.  Performance profiles allow one to benchmark different methods on a  set of problems with respect to a performance measure (in our case, the number of projections). The vertical axis indicates the percentage of problems solved, while the horizontal axis indicates, in log-scale, the corresponding factor of the performance index used by the best solver. The picture clearly shows that cCRM always does better than the other two methods, being faster and more robust (MAP and CRMprod did not solve one instance). In addition,  MAP and CRMprod took more than $2^6$ times the number of projections that cCRM used to solve all problems.

\begin{figure}[hbt!]
  \centering
  \includegraphics[scale=0.7]{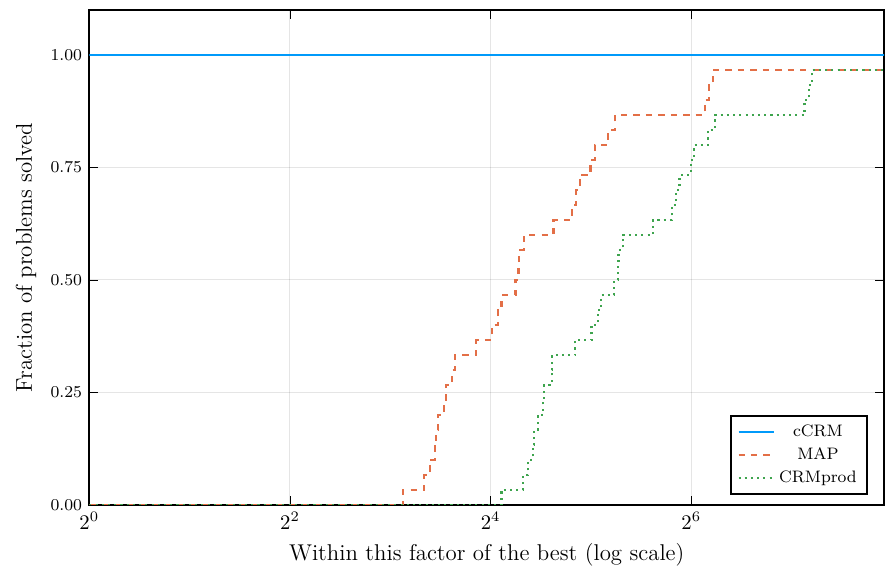}
  \caption{Performance profile of experiments with ellipsoidal feasibility.}
  \label{fig:pp-Ellipsoid}
\end{figure}

We conclude this examination by discussing \Cref{table:ellipsoid}, which presents the following descriptive statistics of the benchmark: mean $\pm$  standard deviation (std), median, minimum (min), and maximum (max) of total projections count. As expected,  cCRM outstandingly handles both MAP and CRMprod. In fact, the superlinear convergence of cCRM is translated numerically as it takes, on average,  \num{31} times fewer projections than MAP while taking almost \num{49} times fewer projections than CRMprod. 

\begin{table}[ht!]
  \centering
  \caption{Statistics of the experiments  (in number of projections) with  $\inte \Eset \neq \emptyset$.}
  \label{table:ellipsoid}
  \sisetup{
table-parse-only,
table-figures-decimal = 2,
table-format = +3.1e-1,
table-auto-round,
separate-uncertainty
%
}
\begin{tabular}{lSSSS}
\toprule  
& {\textbf{mean $\pm$ std}} &   {\textbf{median}}   &  {\textbf{min}} &   {\textbf{max}}     \\
\cmidrule(lr){2-5}
cCRM &    \num{26.13+-44.28}     &    16.0 &  16 &   260\\
MAP &     \num{817.20+-1802.54}    &   308.0 & 140 & 10000\\
CRMprod & \num{1295.07+-1912.92}   &   610.0 & 276 & 10000\\
\bottomrule
\end{tabular}
  
\end{table}

\subsubsection{$\Eset_1\cap\Eset_2$ is a singleton}\label{ss6.2}

We now run the methods in  \num{15} more challenging instances. 
For that, we set $\lambda = 1.0$, that is, $d\coloneqq P_{\Eset_1}(c_2) - c_2$ is the principal axis related to the smallest  semi-axis of $\Eset_2$. Therefore,  $\Eset_1\cap\Eset_2 = \{ P_{\Eset_1}(c_2)\}$, \emph{i.e.}, we have a singleton. This is now a  harder problem to solve because of the lack of regularity since the interior of the intersection is empty (there is no Slater point in it and hence \ref{eb} is not fulfilled). Thus, we use as tolerance $\varepsilon \coloneqq \num{e-3}$. 
Note that we have in hand  the unique solution $\bar z \coloneqq P_{\Eset_1}(c_2)$, so we set as stopping criteria the distance to the solution, that is, we stop whenever
\[
\lVert \bar{z} - {z}^k\rVert <  \varepsilon .
\]
We also allow the methods to go further and take up to  \num{500000} total number of projections.

This time, MAP and CRMprod could not achieve the desired tolerance (\num{e-3}) in any of the instances, that is, in all cases those methods reached the maximum number of projections allowed (\num{500000}). On average, the distance to the solution with this amount of projections for MAP was \num{1.2765e-2}$\pm$\num{2.45e-3}. Those findings were, in fact, expected, for MAP was shown to have, at best, sublinear convergence rate when there is no Lipschitzian regularity between the underlying sets \cite[Theorem 2.2]{Drusvyatskiy:2016}. 

In contrast to MAP and CRMprod,  cCRM seems to converge linearly, even with  \ref{eb} failing to hold. We report that our proposed method took, on average, {\num{28770.1+-40343.4}} projections to stop to the tolerance, whereas the total projections for each instance ranges  from {\num{1068}} (minimum) to {\num{143368}} (maximum). Here, the apparent linear convergence of cCRM over the sublinear behavior of MAP  agrees with the examples in \cite{Arefidamghani:2021}, where in an affine/convex context without error bound, CRM  converges linearly, opposed to MAP's sublinear convergence. This situation certainly deserves a deeper study.

\subsection{Intersection of a second order cone with a polyhedron}\label{sec:inter_cones}

In the following experiments, we want to find $\bar z \in \re^{n}$ that lies in   
\[
\label{eq:ConicSystem}
\Sset \coloneqq \Omega \cap \SOC_n,
\]
where  $\Omega \coloneqq \{w\in \RR^n \mid Aw \leq b\}$, with  $A\in\RR^{m\times n}$ and $b\in\RR^{m}$, is a \emph{polyhedron}  and $\SOC_{n}$ is the  \emph{standard second-order cone of dimension $n$} defined as
\[
\SOC_{n} \coloneqq  \{(t,u)\in \RR^{n}\mid u\in\RR^{n-1}, t\in \RR, \|u\|\leq t \}.
\]
The closed convex set $\SOC_{n}$ is also called the ice-cream cone or the Lorentz cone. This problem, called \emph{second-order conic system feasibility},  arises in the second-order cone programming (SOCP)~\cite{Alizadeh:2003,Lobo:1998}, in which a linear function is minimized over the intersection of a polyhedral set and the intersection of second-order cones, and where an initial feasible point needs to be found~\cite{Cucker:2015}. 

In order to execute our tests, we randomly generate instances of the polyhedron $\Omega$, where $n$ is fixed as \num{200} and $m$ is a random value between $n/3$ and $n$. We guarantee that $\Sset$ is nonempty by the following procedure. We sample a nonzero point $u\in \RR^{n-1}$ from the standard normal distribution, assuring its  norm lies between \num{5} and \num{15},  and form the vectors $\hat z  \coloneqq (\norm{u},u)$, which  clearly belongs to $\SOC_n$,  and $\hat d  \coloneqq (-\norm{u},u)$. Note that $\hat z$ and $\hat d$ are orthogonal.
Next, we sample, from the standard normal distribution, unitary vectors  $a_i \in \RR^{n}$, $i=1,\ldots,m$,  such that $\langle {a_i}, {{\hat d}}\rangle  < 0$, \emph{i.e.}, the correspondent angles are strictly obtuse.  Each  $a_i$ is set as a row of matrix $A$, while $b\coloneqq A(\hat z - \tau \hat d)$, where $\tau\in [0,1]$. In this way,  $\Omega$ is a polyhedron and $\hat z \in \Omega$, guaranteeing the nonemptiness of $\Sset$, as required. Observe that the $a_i$'s are the generators of the polar cone of $\Omega$ at $\hat z - \tau \hat d$. By construction, if $\tau \in (0,1]$, the interior of $\Sset$ is nonempty  and, in particular, Assumption \ref{eb} is satisfied. If  $\tau \coloneqq 0$,  Assumption \ref{eb} may fail to be fulfilled.  In fact, we verified that whenever we ran an instance where $\tau$ was zero the error bound did not hold; this was checked by exploring the structure of the problem. 


Each of the instances we generate is run for \num{4} initial random points. Each initial point is also sampled from the standard normal distribution, with norms ranging from  \num{5} and \num{15},  and is accepted as long as it is not in $\Sset$.  In order to handle the projections onto the convex sets $\SOC_n$ and $\Omega$, we  employ in our implementation the open source \texttt{Julia} package \texttt{ProximalOperators.jl}~\cite{Stella:2022}.

\subsubsection{Instances where $\inte \Sset$ is nonempty}

We create \num{50} instances, summing up to \num{200} individual tests, taking $\tau \coloneqq \tfrac{1}{4}$; this value of $\tau$ assures that the interior of $\Sset$ is nonempty, so that the error bound condition \ref{eb} holds.
Let $(w^k)_{k\in \na}$ be any of the three sequences that we monitor, generated by cCRM, MAP, and CRMprod. We considered as tolerance $\varepsilon \coloneqq  10^{-6}$
and employed as  stopping criteria  the \emph{gap distance}, given by
\[\label{eq:gap_distance_PolySOC}
\|P_{\Omega}(w^k) - P_{\SOC_n}(w^k)\|< \varepsilon,
\]
which is reliable measure of infeasibility in view of \ref{eb} being satisfied. The projections computed to measure the gap distance can be utilized in the next iteration, thus this calculation does not add any extra cost.

The results displayed in   \Cref{fig:pp-PolySOC_tau=0.25}  and \Cref{tab:PolySOC_tau=0.25} clearly show a better performance of  cCRM over  MAP and CRMprod.   Recall that, cCRM uses four projections at each step;  thus, from \Cref{tab:PolySOC_tau=0.25} we can conclude the new proposed method  takes, in average, \num{4.95} iterations to converge. Of course, this was expected due to the result in \Cref{theorem:superlinearConv}, stating the superlinear convergence of cCRM.  

\begin{figure}[!ht]

  \centering
  \includegraphics[width=.7\textwidth]{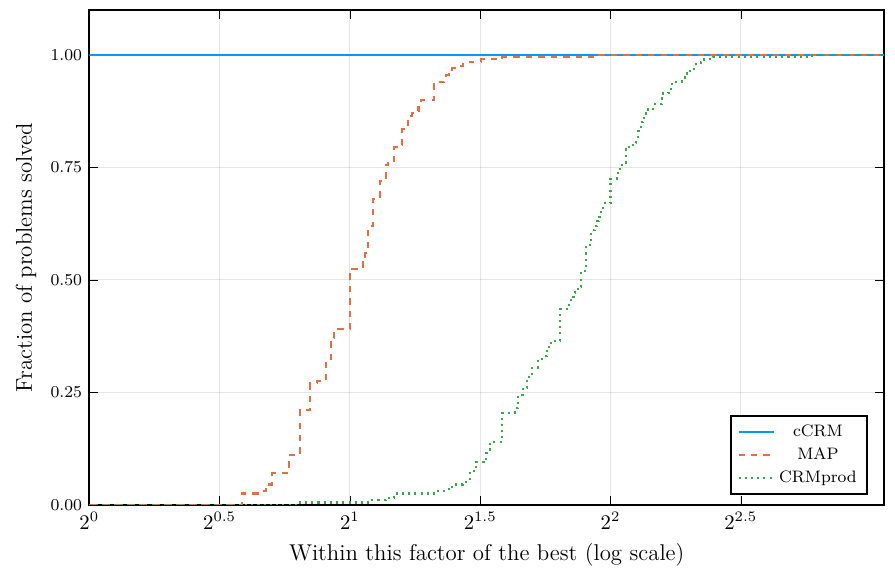}

\caption{Performance profile of experiments considering  a polyhedron and  a second order cone  with $\inte \Sset \neq \emptyset$.\label{fig:pp-PolySOC_tau=0.25}}
\end{figure}

\begin{table}[!ht]
  \caption{\label{tab:PolySOC_tau=0.25} Statistics of the experiments considering a polyhedron and a second order cones with $\inte \Sset \neq\emptyset$ (in number of projections).}
  \centering 
  \sisetup{
    table-parse-only,
    table-figures-decimal = 0,
    table-format = +3.1e-1,
    table-auto-round,
    separate-uncertainty
    %
    }
    \begin{tabular}{lSSSS}
    \toprule  
    & {\textbf{mean $\pm$ std}} &   {\textbf{median}}   &  {\textbf{min}} &   {\textbf{max}}     \\
    \cmidrule(lr){2-5}
    cCRM &      \num{ 19.8 +-   3.80}  &  20.0   &    8  &   28\\
    MAP &       \num{40.62 +-  10.20 }  & 39.0    &   24  &  72\\
    CRMprod &   \num{73.38 +-  21.94 }  & 70.0    &   28  &  140  \\
    \bottomrule
    \end{tabular}
  \end{table}

\subsubsection{Instances where $\inte \mathcal{S} = \emptyset$}

We report now the numerical experiments for instances in which  $\tau \coloneqq 0$, \emph{i.e.}, $\inte \mathcal{S}$ is empty; this is  a possible more challenging scenario because  \ref{eb} may be violated. We still use the gap distance \cref{eq:gap_distance_PolySOC} as a measure of infeasibility, even though   not as reliable as in the last experiments, due to the possible lack of \ref{eb}. Again, \num{50} instances are generated, so we gather the results of \num{200} tests.

 The performance profile of \Cref{fig:pp-PolySOC_tau=0.0} once again shows that cCRM is faster and more robust than its counterparts. In \Cref{tab:PolySOC_tau=0.0}, we can see that  cCRM took on average almost 3 times fewer projections than MAP while outperforming CRMprod, being more than 4 times faster, in number of projections required, to achieve the desired precision.

\begin{figure}[!ht]

    \centering
    \includegraphics[width=.7\textwidth]{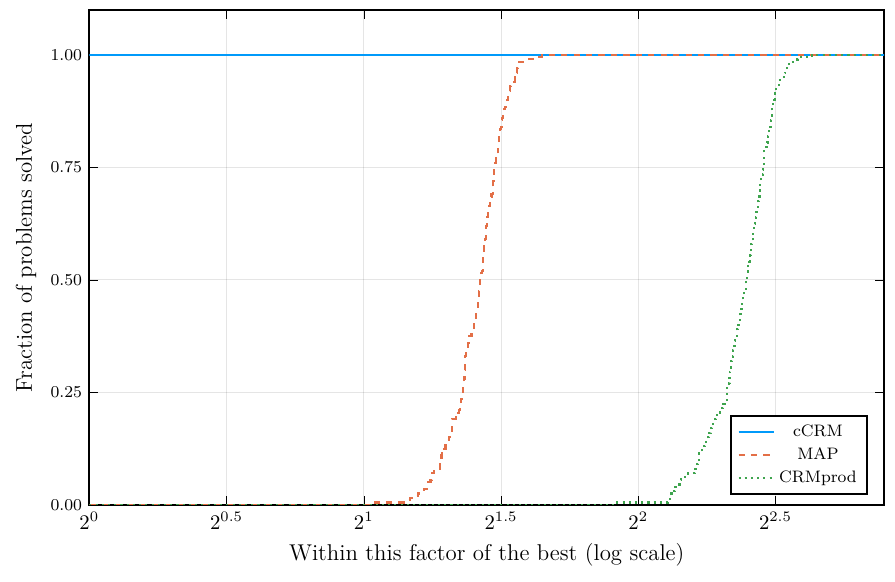}

\caption{Performance profile of experiments with polyhedral and second order cones feasibility with $\inte \Sset = \emptyset$.\label{fig:pp-PolySOC_tau=0.0}}
\end{figure}

\begin{table}[!ht]
\caption{\label{tab:PolySOC_tau=0.0} Statistics of the experiments with polyhedral and second order cones with $\inte \Sset = \emptyset$ (in number of projections).}
\centering 
\sisetup{
  table-parse-only,
  table-figures-decimal = 2,
  table-format = +3.1e-1,
  table-auto-round,
  separate-uncertainty
  %
  }
  \begin{tabular}{lSSSS}
  \toprule  
  & {\textbf{mean $\pm$ std}} &   {\textbf{median}}   &  {\textbf{min}} &   {\textbf{max}}     \\
  \cmidrule(lr){2-5}
  cCRM &     \num{75.14 +-32.82 }&    68.0   & 24  & 188  \\
  MAP &      \num{201.87+-91.78 }  &  185.0  & 62  & 490  \\
  CRMprod &  \num{396.23+-184.47  }  &  364.0  & 120 & 990  \\

  \bottomrule
  \end{tabular}
\end{table}

\section{Concluding remarks}\label{s7}

Circumcenter-type methods have been attracting substantial interest in the last few years. In the present work, we introduce and study the centralized circumcentered-reflection method (cCRM) for finding a point in the intersection of two closed convex sets. Global convergence of cCRM is established, as well as linear convergence under an error bound condition. Moreover,  superlinear convergence of cCRM is derived under  local smoothness of the boundaries of the sets and the assumption that their intersection has nonemtpy interior. We note that  cCRM does not employ  any  product space reformulation, which is a significant advance in the theory of generalized circumcenters. Our numerical tests are consistent with the theory we developed and reassure the Newtonian flavor of circumcenter schemes. An interesting topic for future research is the 
 development of multi-set centralization techniques.

  \begin{acknowledgements}
We thank the anonymous referees for their valuable suggestions which significantly improved  this manuscript. 
\end{acknowledgements}

\bibliographystyle{spmpsci}

\bibliography{refs}

\end{document}